\documentclass{amsart}
\usepackage[utf8]{inputenc}

\usepackage{amsmath}
\usepackage{amsthm}
\usepackage{amssymb}
\usepackage{amsfonts}
\usepackage{amsopn}
\usepackage{amsrefs}
\usepackage{hyperref}
\usepackage[color=green!30]{todonotes}

\usepackage{import}

\usepackage{tikz}
\usetikzlibrary{intersections,decorations.markings,patterns,decorations.pathreplacing,matrix,arrows}

\def\Z{\mathbb{Z}}
\def\Q{\mathbb{Q}}
\def\R{\mathbb{R}}

\def\C{\mathbb{C}}
\def\wt#1{\widetilde{#1}}
\def\ol#1{\overline{#1}}

\numberwithin{equation}{section}
\newtheorem{theorem}[equation]{Theorem}
\newtheorem{proposition}[equation]{Proposition}
\newtheorem{lemma}[equation]{Lemma}
\newtheorem{corollary}[equation]{Corollary}
\theoremstyle{definition}
\newtheorem{definition}[equation]{Definition}

\theoremstyle{remark}
\newtheorem{remark}[equation]{Remark}

\newtheorem*{ack}{Acknowledgments}

\title{Untwisting number and Blanchfield pairings}

\author{Maciej Borodzik}
\address{Institute of Mathematics, Polish Academy of Science, ul. \'Sniadeckich 8,
00-656 Warsaw, Poland}
\address{Institute of Mathematics, University of Warsaw, ul. Banacha 2,
02-097 Warsaw, Poland}
\email{mcboro@mimuw.edu.pl}


\begin{document}
\begin{abstract}
In this note we use Blanchfield forms to study knots that can be turned into an unknot using a single $\ol{t}_{2k}$ move.
\end{abstract}
\maketitle

\section{Overview}
Let $K\subset S^3$ be a knot and $k\in\Z\setminus\{0\}$.
In this paper by a \emph{$k$--twisting move} we mean a move depicted in Figure~\ref{fig:k_untwist},
that is, a full right $k$--twist on two strands of $K$ going in the opposite direction (in \cite{Przy} this move
is called a $\ol{t}_{2k}$--move). We will call a knot \emph{$k$--simple}
if it can be unknotted by a single $k$--untwisting move. A knot is \emph{algebraically $k$--simple} if a single $k$--untwisting move
turns it into a knot with Alexander polynomial $1$.

\begin{figure}[h]
\begin{tikzpicture}[y=2pt,x=2pt,yscale=-1, inner sep=0pt, outer sep=0pt]
\begin{scope}[thick,decoration={
    markings,
    mark=at position 0.5 with {\arrow{>}}}
    ] 
  \draw[postaction={decorate}]
    (26.8363,96.8616) .. controls (49.0963,93.9030) and (56.4631,107.2535) ..
    (75.4062,100.0744);
  \draw[postaction={decorate}]

    (81.0759,97.9955) .. controls (96.5149,89.4987) and (107.3195,106.9761) ..
    (120.9524,101.2083);
  \draw[postaction={decorate}]

    (128.7009,97.6176) .. controls (131.7275,96.1448) and (134.8298,95.4826) ..
    (138.1503,94.9717);
\begin{scope}[xshift=-180,yshift=-80]
  \draw[postaction={decorate}]

    (120,95) -- 
    (230,95);
  \draw[postaction={decorate}]

    (230,100) -- 
    (120,100);
\end{scope}
\draw[thick, dashed,->] (88,65) -- ++ (0,25) node [midway, right,scale=0.8] {\ twisting move};
\end{scope}
\begin{scope}[thick,decoration={markings,mark=at position 0.5 with {\arrow{<}}}]
  \draw[postaction={decorate}]

    (25.7024,101.9643) .. controls (26.1564,101.9151) and (26.5980,101.8230) ..
    (27.0303,101.8421) .. controls (37.5839,102.3070) and (40.1972,103.7781) ..
    (47.8140,101.0193);
  \draw[postaction={decorate}]

    (52.1607,98.1845) .. controls (69.0885,89.8972) and (85.1080,106.8904) ..
    (100.7306,100.6414);
  \draw[postaction={decorate}]

    (104.5104,98.7515) .. controls (117.1519,91.0529) and (127.7560,102.8947) ..
    (138.7173,101.2083);
\end{scope}
\end{tikzpicture}
\caption{A $k$--twisting move for $k=2$. Note that the strands in the picture go in different directions.}\label{fig:k_untwist}
\end{figure}
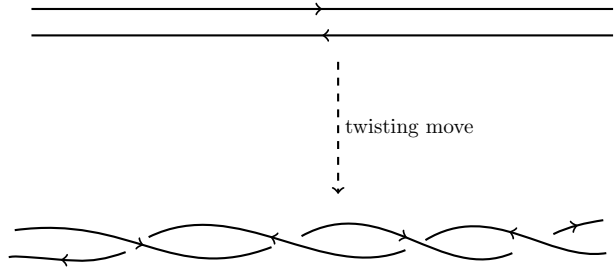

Our first result gives an obstruction to the untwisting move in terms of the algebraic unknotting number \cite{Fo93,Muk90,Sae99}.
\begin{theorem}\label{cor:unknotting}
Suppose $K$ is an algebraically $k$--simple knot.
If $k$ is odd, then $K$
can be turned into a knot with Alexander polynomial $1$ using at most two crossing changes. If $k$ is even, then at most three crossing
changes are enough to turn $K$ into a knot with Alexander polynomial $1$.
\end{theorem}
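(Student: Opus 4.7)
The plan is to deduce this from a combination of (a) a characterization of the algebraic unknotting number in terms of the Blanchfield pairing, and (b) a structural result describing how a $k$-twisting move modifies the Blanchfield pairing, which should be the main technical input of the paper.

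First, I would invoke the Borodzik--Friedl refinement of Fogel--Saeki: the algebraic unknotting number $u_a(K)$ --- the minimum number of crossing changes turning $K$ into a knot with Alexander polynomial $1$ --- equals the minimum size of a Hermitian matrix $A(t)$ over $\Z[t,t^{-1}]$ that presents $\mathrm{Bl}(K)$ and has $A(1)$ congruent over $\Z$ to a diagonal matrix of $\pm 1$'s. So it suffices to exhibit such a matrix of size $2$ or $3$.

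Next, I would apply the paper's structural lemma on $k$-twisting moves: a single $k$-twist takes a Hermitian presentation $A'(t)$ of $\mathrm{Bl}(K')$ to an enlarged Hermitian presentation $A(t)$ of $\mathrm{Bl}(K)$ with at most $s$ additional rows and columns. The parity dichotomy of the theorem should mirror a dichotomy in $s$: for $k$ odd, the surgery picture contributes a new Hermitian block of size at most $2$ that already satisfies the $A(1)$ condition; for $k$ even, it contributes a block of the same size whose value at $t=1$ is of hyperbolic type and therefore requires one extra $\langle \pm 1\rangle$ stabilization before it becomes diagonal over $\Z$ with $\pm 1$ entries, bumping the total size up to $3$. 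Combining this with the hypothesis $\Delta_{K'}=1$, we may take $A'$ to be the empty matrix, so the resulting presentation $A(t)$ of $\mathrm{Bl}(K)$ has size at most $2$ (for $k$ odd) or $3$ (for $k$ even) with $A(1)$ in the required form, and the previous paragraph then gives the claimed bounds on $u_a(K)$.

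The main obstacle, and the place where the $k$-twist geometry really enters, is the structural lemma. Concretely, one views the $k$-twist as $-1/k$ surgery on an unknot $c$ encircling the two strands with $\mathrm{lk}(c,K)=0$, and analyzes the resulting $4$-dimensional cobordism between $S^3$ and $S^3$ carrying a surface from $K$ to $K'$. Lifting to the appropriate cyclic cover and tracking how the intersection form descends to a modification of the Hermitian matrix presenting the Blanchfield pairing is the technical heart: one must identify the precise size and shape of the new block, and, crucially, pin down its value at $t=1$ finely enough to distinguish the odd and even cases. Once that accounting is carried out, the theorem is a formal consequence of the Borodzik--Friedl characterization.
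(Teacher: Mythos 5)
Your proposal follows essentially the same route as the paper: it deduces the bound from the Borodzik--Friedl identification of the algebraic unknotting number with the minimal size of a Hermitian presentation of the Blanchfield pairing whose value at $t=1$ is diagonalizable over $\Z$, combined with the paper's main technical result that a single $k$-twist yields the $2\times 2$ presentation $\left(\begin{smallmatrix}\alpha(t)&1\\1&-k\end{smallmatrix}\right)$ with $\alpha(1)=0$, and your parity dichotomy (odd $k$ gives an odd unimodular indefinite form at $t=1$, even $k$ gives a hyperbolic form requiring one $\langle 1\rangle$ stabilization) is exactly the paper's argument. The geometric sketch of the surgery cobordism and twisted intersection form likewise matches the paper's construction of $W_K$ from the Freedman $4$-manifold $W_J$ with two $2$-handles attached.
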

Our second result restricts the homology of the double branched cover of an algebraically $k$--simple knot.
\begin{theorem}\label{thm:iscyclic}
Suppose $K$ is an algebraically $k$--simple knot. Denote by $\Sigma(K)$ the double branched cover of $K$. Then $H_1(\Sigma(K);\Z)$ is cyclic.
\end{theorem}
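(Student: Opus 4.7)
The plan is to apply the Montesinos trick. First, observe that the $k$--twisting move is a rational tangle replacement: inside a ball $B$ containing the twist region of Figure~\ref{fig:k_untwist}, the two strands of $K$ form the trivial $0$--tangle, while the two strands of the modified knot $K'$ form the integer tangle with $2k$ half-twists. Both are rational tangles.

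By the Montesinos trick, two knots differing by a rational tangle replacement have double branched covers related by a Dehn surgery on a single knot. Concretely, writing $\Sigma(K) = V_1 \cup_T M$ and $\Sigma(K') = V_2 \cup_T M$, where $V_1,V_2$ are the solid tori arising as branched double covers of the two rational tangles in $B$ and $M$ is the branched double cover of the complementary tangle in $S^3 \setminus B$, one sees that $\Sigma(K)$ is obtained from $\Sigma(K')$ by removing a neighborhood of the core $L$ of $V_2$ and regluing a solid torus along a different slope.

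By hypothesis $K'$ has Alexander polynomial $1$, so $|H_1(\Sigma(K');\Z)| = |\Delta_{K'}(-1)| = 1$; that is, $\Sigma(K')$ is an integer homology sphere. The knot complement $\Sigma(K') \setminus N(L)$ therefore has $H_1 \cong \Z$ generated by a meridian $\mu$ of $L$, while the Seifert-framed longitude $\lambda$ is null-homologous in the complement. Dehn filling along any slope $p\mu + q\lambda$ then produces a manifold with $H_1 \cong \Z/p\Z$, which is cyclic.

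The main step that deserves care is the identification of the $k$--twisting move with a rational tangle replacement, which boils down to checking that both tangles inside $B$ are rational; this is immediate. All subsequent arguments are standard Dehn surgery considerations, and no input from the Blanchfield form framework developed elsewhere in the paper appears to be needed for this particular statement. An equivalent viewpoint, potentially closer to the spirit of the paper, is that the move is realised by $-1/k$--surgery on an unknot $C \subset S^3 \setminus K$ with $\operatorname{lk}(C,K)=0$; the disk that $C$ bounds in $S^3$ (meeting the two strands transversally) lifts to an annulus in $\Sigma(K)$ whose boundary consists of the two lifts $\tilde C_1,\tilde C_2$ of $C$, so surgery on the pair collapses to surgery on a single knot, again giving the cyclic conclusion.
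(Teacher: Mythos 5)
Your main argument is correct, but it is a genuinely different route from the paper's. The paper deduces Theorem~\ref{thm:iscyclic} from its main technical result: the $4$--manifold construction of Section~\ref{sec:proof} (relying on Freedman's theorem and the identification of the twisted intersection form with the Blanchfield pairing) produces a $2\times 2$ presentation matrix $\left(\begin{smallmatrix}\alpha(t) & 1\\ 1 & -k\end{smallmatrix}\right)$ of the Blanchfield form; evaluating at $t=-1$ gives a presentation matrix $B=\left(\begin{smallmatrix} d & 1\\ 1 & -k\end{smallmatrix}\right)$ of $H_1(\Sigma(K);\Z)$ together with its linking form, and elementary row and column operations show the cokernel is cyclic of order $|dk+1|$. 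Your Montesinos-trick argument sidesteps all of this machinery: the $\ol{t}_{2k}$ move is a rational tangle replacement, so $\Sigma(K)$ is obtained by Dehn surgery on a single knot in $\Sigma(K')$, and since $\Delta_{K'}=1$ forces $\Sigma(K')$ to be a homology sphere, the knot exterior has $H_1\cong\Z$ and every Dehn filling of it has cyclic first homology. This is shorter, purely three-dimensional, and needs no input from Sections~\ref{sec:blanchfield}--\ref{sec:proof}. What the paper's approach buys in exchange is strictly more information: Corollary~\ref{cor:linking_form_obstruction} pins down the isometry type of the (doubled) linking form, not merely the isomorphism type of the group, and it is this finer statement that is used to obstruct untwisting number elsewhere; the cyclicity is only a byproduct there.

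One caveat on your closing aside: the lift of the surgery curve $C$ to $\Sigma(K)$ is a \emph{two}-component link $\tilde C_1\cup\tilde C_2$ (the boundary of the lifted annulus), and surgery on a two-component link in a homology sphere does not in general yield cyclic first homology. The claim that this ``collapses to surgery on a single knot'' is exactly the content of the Montesinos trick and is not automatic from the annulus picture alone; as written, that alternative viewpoint is not a proof. Your primary tangle-replacement argument does not suffer from this issue and stands on its own.
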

Both Theorem~\ref{cor:unknotting} and Theorem~\ref{thm:iscyclic} follow from the following result, which is the main technical result of this paper.
\begin{theorem}\label{thm:main}
Suppose $K$ is an algebraically $k$--simple knot. Then there exists a polynomial $\alpha(t)\in\Z[t,t^{-1}]$ satisfying $\alpha(1)=0$, 
$\alpha(t^{-1})=\alpha(t)$, such that
the matrix
\[\begin{pmatrix} \alpha(t) & 1 \\ 1 & -k \end{pmatrix}\]
represents the Blanchfield pairing for $K$.
\end{theorem}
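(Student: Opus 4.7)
The plan is to realise the $k$-twist geometrically as a $4$-dimensional cobordism and to read off the Blanchfield pairing from its equivariant intersection form. Let $K_0$ be the knot obtained from $K$ by the single $k$-twist, so that $\Delta_{K_0}(t)=1$ and the Blanchfield pairing of $K_0$ vanishes. The $k$-twist corresponds to $-1/k$ Dehn surgery on an unknot $J\subset S^3\setminus K$ that bounds a disk meeting $K$ in two points of opposite sign. A standard Rolfsen blow-up replaces this rational surgery by integer surgery on the Hopf link $J\cup J_0$, where $J_0$ is a small meridian of $J$, with framings $0$ on $J$ and $-k$ on $J_0$.

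I would then take $W$ to be $S^3\times[0,1]$ with two $2$-handles attached along $J\times\{1\}$ and $J_0\times\{1\}$ with these framings. Both ends of $W$ are $3$-spheres, containing $K$ and $K_0$ respectively and cobounding the annulus $A=K\times I$. Removing a neighbourhood of $A$ gives a cobordism $W_K$ from $X(K)$ to $X(K_0)$; since $\ell k(J,K)=\ell k(J_0,K)=0$, the abelianisation $\pi_1(X(K))\to\Z$ extends across $W_K$, so the infinite cyclic cover $\wt W_K$ is defined, and all homology groups below are taken with coefficients in $\Lambda=\Z[t,t^{-1}]$. The long exact sequence of the pair $(\wt W_K,\wt X(K)\cup\wt X(K_0))$, combined with the vanishing of the Alexander module of $K_0$, presents the Alexander module of $K$ as the cokernel of the equivariant intersection form on $H_2(\wt W_K,\partial\wt W_K)\cong\Lambda^2$. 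In the basis given by the cores of the two lifted $2$-handles, this form equals
\[\begin{pmatrix}\alpha(t) & 1 \\ 1 & -k\end{pmatrix},\]
where $\alpha(t)$ is the equivariant self-linking of $J$ with respect to its $0$-framing, the off-diagonal $1$'s come from the Hopf linking $\ell k(J,J_0)=1$, and $-k$ is the framing of $J_0$.

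The properties of $\alpha$ then follow: Hermiticity of the equivariant intersection form gives $\alpha(t^{-1})=\alpha(t)$, and specialising the matrix at $t=1$ recovers the ordinary intersection form of $W$, which is the linking matrix of the Hopf link with framings $0$ and $-k$; its $(1,1)$-entry is $0$, so $\alpha(1)=0$. That the equivariant intersection form actually represents the Blanchfield pairing, and not merely the Alexander module, is the standard bordism principle for linking forms, applicable because the determinant of the matrix at $t=1$ equals $-1$, a unit. The main obstacle is the explicit identification of the presentation matrix: one must show that the lifted handle cores generate $H_2(\wt W_K,\partial\wt W_K)$ freely of rank two over $\Lambda$, and compute the equivariant self-linking $\alpha(t)$ by controlling the positions of $J$ and its $t$-translates in the infinite cyclic cover of $X(K)$.
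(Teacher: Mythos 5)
Your construction is the Rolfsen-style one: you attach the two $2$--handles of the surgery presentation to $X(K)\times[0,1]$, obtain a relative cobordism from $X(K)$ to $X(K_0)$, and read the matrix off the equivariant intersections of the lifted handle cores. This does present the Alexander module of $K$ correctly (using that the Alexander module of $K_0$ vanishes), and your derivations of the entries, of $\alpha(t^{-1})=\alpha(t)$, and of $\alpha(1)=0$ are all sound. The paper takes a different route precisely to avoid the point you gloss over: since $\Delta_{K_0}=1$, Freedman--Quinn provides a topological $4$--manifold homotopy equivalent to $D^3\times S^1$ bounded by the zero-surgery $M_{K_0}$; gluing this to the surgery cobordism yields a $4$--manifold $W_K$ with the \emph{single} boundary component $M_K$, with $\pi_1(W_K)\cong\Z$ and $H_1(M_K;\Z)\to H_1(W_K;\Z)$ an isomorphism, so that \cite[Theorem 2.6]{BF1} applies verbatim and identifies the $\Lambda$--intersection form of $W_K$ with a representative of the Blanchfield pairing; freeness of $H_2(W_K;\Lambda)$ and the spherical generators then come from $\pi_2$ via \cite[Proposition 3.30]{Ran02}.

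The genuine gap is your sentence asserting that the matrix represents the Blanchfield pairing, not merely the Alexander module, ``by the standard bordism principle for linking forms, applicable because the determinant of the matrix at $t=1$ equals $-1$.'' This is exactly the step the paper flags as nontrivial in its closing remark of Section~\ref{sec:proof} about the Rolfsen-type computation, and the condition you cite is not the relevant one: unimodularity of $A(1)$ holds for any knot and any representative, whereas what the bordism principle actually requires is a Poincar\'e--Lefschetz duality argument over $\Lambda$ on a $4$--manifold with $\pi_1\cong\Z$ and connected boundary $M_K$ (so that $H_2(\wt W_K,\partial\wt W_K)$ is identified with $\overline{\operatorname{Hom}_\Lambda(H_2(\wt W_K),\Lambda)}$ and the connecting map carries the form $A^{-1}$ to the Blanchfield pairing). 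Your $W_K$ has two boundary components, and to make the principle apply you must either cap off the $X(K_0)$ end --- which is where Freedman's theorem enters and where the hypothesis $\Delta_{K_0}=1$ is used in an essentially four-dimensional way --- or prove a relative version of \cite[Theorem 2.6]{BF1} handling the second boundary component; you do neither. Relatedly, the issue you single out as ``the main obstacle'' (freeness of the relative $H_2$ and the computation of $\alpha$) is the routine part; the missing content is the duality step above.
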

Theorem~\ref{thm:main} can be regarded as a generalization of \cite[Theorem 3.2(b)]{Przy}.

It is possible to generalize the techniques used in this paper to study knots that are untwisted with several $\ol{t}_{2k}$ moves, possibly with varying
the twisting coefficients $k$. This generalization is straightforward, we omit to make the paper shorter and more concise.

Proof of Theorem~\ref{thm:main} is given in Section~\ref{sec:proof}.
Proof of Theorem~\ref{cor:unknotting} is given in Section~\ref{sec:applications}.
Section~\ref{sec:linkingforms} contains
the proof of a stronger version of Theorem~\ref{thm:iscyclic}.

\begin{ack}
The author is grateful to A.~Conway, S.~Friedl, C.~Livingston, W.~Politarczyk and J.~Przytycki for fruitful conversations.
He is especially indebted to A.~Ranicki for pointing out \cite[Proposition 3.30]{Ran02}.
The research is supported by
the National Science Center grant 2016/22/E/ST1/00040.
\end{ack}

\section{Blanchfield pairing}\label{sec:blanchfield}
Let $K\subset S^3$ be a knot and let $M_K$ denote its zero-framed surgery. Denote by $\wt{M}_K$ the universal abelian cover of $M_K$. The chain
complex $C_*(\wt{M}_K;\Z)$ admits the action of the deck transform and thus it has a structure of a $\Lambda$--module, where $\Lambda=\Z[t,t^{-1}]$.
The homology of this complex, regarded 
as a $\Lambda$--module, is denoted by $H_*(M_K;\Lambda)$. The module $H_1(M_K;\Lambda)$ is called the \emph{Alexander module} of the knot $K$.
\begin{remark}
Usually the Alexander module is defined using knot complements instead of zero--framed surgeries, but the two definitions are
equivalent; see e.g. \cite{FP}.
\end{remark}
The ring $\Lambda$ has a naturally defined convolution $t\mapsto t^{-1}$.
The Blanchfield pairing defined in
\cite{Bl57} for $K$ is a sesquilinear symmetric pairing $H_1(M_K;\Lambda)\times H_1(M_K;\Lambda)\to Q/\Lambda$, where $Q$ is the field
of fractions for $\Lambda$. We refer to \cite{FP,Hi12} for a precise and detailed construction of the Blanchfield pairing and \cite{Con,CFT} for
generalizations.

\begin{definition}\label{def:repres}
We say that an $n\times n$ matrix $A$ with entries in $\Lambda$ \emph{represents} the Blanchfield pairing if $H_1(M_K;\Lambda)\cong \Lambda^n/A\Lambda^n$
as a $\Lambda$--module, under this identification the Blanchfield pairing has form $(a,b)\mapsto a^TA^{-1}\ol{b}$ and moreover
$A(1)$ is diagonalizable over $\Z$.
\end{definition}
It is known, see \cite{Kear}, that every Blanchfield pairing can be represented by a finite matrix. The minimal size of a matrix representing the Blanchfield
pairing of a knot is denoted by $n(K)$. It is equal to the algebraic unknotting number $u_a(K)$; see \cite{BF3,BF1}.

The invariant $n(K)$ can also be generalized for other coefficient ring $R$. In this paper we restrict to rings $R$ that are subrings of $\C$.
We denote by $n_R(K)$ the minimal size of a matrix over $R[t,t^{-1}]$ representing the
Blanchfield pairing over $R[t,t^{-1}]$.
We have that $n_R{K}\le n_{R'}(K)$
if $R'$ is a subring of $R$. 
Often $n_R(K)$ is easier to compute than $n(K)=n_\Z(K)$,
for example the value of $n_{\R}$ can be calculated from the Tristram--Levine signature \cite{BF2}. One motivation of this paper is to give
a geometric interpretation of $n_R(K)$ for some rings $R$.

\section{Proof of Theorem~\ref{thm:main}}\label{sec:proof}
The main ingredient in the proof of Theorem~\ref{thm:main} is the following.
\begin{theorem}[see \expandafter{\cite[Theorem 2.6]{BF1}}]\label{thm:26}
Suppose $W_K$ is a topological four--manifold such that $\partial W_K=M_K$, $\pi_1(W_K)=\Z$ and the inclusion induced map $H_1(M_K;\Z)\to H_1(W_K;\Z)$
is an isomorphism. Then $H_2(W_K;\Lambda)$ is free of rank $b_2(W_K)$. Moreover if $A$ is matrix over $\Lambda$
representing the twisted intersection form on $H_2(W_K;\Lambda)$ in some basis of $H_2(W_K;\Lambda)$, then $A$ also represents
the Blanchfield pairing on $M_K$.
\end{theorem}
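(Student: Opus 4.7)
The proof is a standard twisted Poincar\'e--Lefschetz duality argument. Let $\Lambda=\Z[t^{\pm 1}]$. Since $\pi_1(W_K)=\Z$, the universal cover $\wt{W}_K$ coincides with the infinite cyclic cover and is simply connected. This immediately gives
\[
H_0(W_K;\Lambda)\cong\Lambda/(t-1),\qquad H_1(W_K;\Lambda)=0,\qquad H_4(W_K;\Lambda)=0,
\]
the last because $\wt{W}_K$ is noncompact. The long exact sequence of the pair $(W_K,M_K)$, together with the hypothesis that $H_1(M_K;\Z)\to H_1(W_K;\Z)$ is an isomorphism (which makes the $\Lambda$--structures on $H_0(M_K;\Lambda)$ and $H_0(W_K;\Lambda)$ compatible and forces $H_1(W_K,M_K;\Lambda)=0=H_0(W_K,M_K;\Lambda)$), combined with twisted Poincar\'e--Lefschetz duality $H_i(W_K;\Lambda)\cong H^{4-i}(W_K,M_K;\Lambda)$ and the universal coefficient theorem over $\Lambda$, then yields $H_3(W_K;\Lambda)=0$ and the reflexivity identifications
\[
H_2(W_K;\Lambda)\cong\mathrm{Hom}_\Lambda\bigl(H_2(W_K,M_K;\Lambda),\Lambda\bigr),\qquad H_2(W_K,M_K;\Lambda)\cong\mathrm{Hom}_\Lambda\bigl(H_2(W_K;\Lambda),\Lambda\bigr).
\]

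To deduce that $H_2(W_K;\Lambda)$ is free of rank $b_2(W_K)$, I would choose a handle decomposition of $W_K$ with a single $0$--handle, a single $1$--handle generating $\pi_1(W_K)=\Z$, exactly $b_2(W_K)$ many $2$--handles, and no higher handles; the hypothesis that $H_1(M_K;\Z)\to H_1(W_K;\Z)$ is an isomorphism is precisely what makes such a reduction possible, and the cocores of the $2$--handles lift to a $\Lambda$--basis of $H_2(W_K;\Lambda)$. (Alternatively, one can combine the reflexivity above with the Quillen--Suslin theorem applied to $\Lambda$.) Setting $t=1$ in $A$ recovers the ordinary integral intersection form on the free part of $H_2(W_K;\Z)$, from which the diagonalizability of $A(1)$ required by Definition~\ref{def:repres} follows under the standard integrality hypothesis on the framings of the $2$--handles.

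Finally, I would identify $A$ with the connecting homomorphism. The above calculations, together with the fact that the boundary map $H_3(W_K,M_K;\Lambda)\to H_2(M_K;\Lambda)$ is an isomorphism between two copies of $\Lambda/(t-1)$, collapse the long exact sequence of $(W_K,M_K)$ to
\[
0\longrightarrow H_2(W_K;\Lambda)\xrightarrow{\,j_*\,} H_2(W_K,M_K;\Lambda)\xrightarrow{\,\partial\,} H_1(M_K;\Lambda)\longrightarrow 0.
\]
Under the duality identification $H_2(W_K,M_K;\Lambda)\cong\mathrm{Hom}_\Lambda(H_2(W_K;\Lambda),\Lambda)$, the inclusion $j_*$ is exactly the adjoint of the twisted intersection form, whose matrix in the chosen basis is $A$. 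This gives $H_1(M_K;\Lambda)\cong\Lambda^n/A\Lambda^n$. The explicit formula $(a,b)\mapsto a^T A^{-1}\overline{b}$ for the Blanchfield pairing then follows by chasing its definition via the Bockstein sequence $0\to\Lambda\to Q\to Q/\Lambda\to 0$ and the $\Lambda$--duality pairing, which is a routine naturality argument. The main obstacle I anticipate is the freeness step: over $\Lambda$ a finitely generated torsion--free (or even reflexive) module need not be free, so one must commit either to the handle--theoretic argument or to the Quillen--Suslin input and set it up carefully.
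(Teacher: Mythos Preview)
The paper does not prove this theorem; it is quoted verbatim from \cite[Theorem~2.6]{BF1} and used as a black box. So there is no ``paper's own proof'' to compare against---only your sketch versus the original Borodzik--Friedl argument. Your Poincar\'e--Lefschetz/long-exact-sequence framework is indeed the standard approach and is essentially what \cite{BF1} does, but two steps in your write-up are genuinely problematic.

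First, the freeness argument via a handle decomposition fails as stated: $W_K$ is only a \emph{topological} $4$--manifold, and topological $4$--manifolds need not admit handle decompositions. (Indeed, in the application in this paper $W_K$ is built by gluing a topological homotopy $S^1\times D^3$ coming from Freedman--Quinn to a smooth cobordism, so it is not known to be smooth.) So you cannot ``choose a handle decomposition of $W_K$'' without further justification. Your alternative via reflexivity plus Quillen--Suslin is closer to what is actually done in \cite[Lemma~2.7]{BF1}: one shows $H_2(W_K;\Lambda)$ is finitely generated, torsion-free, and that its $\Lambda$-dual is again finitely generated of the same rank, then invokes the fact that finitely generated projective $\Z[t,t^{-1}]$--modules are free. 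You acknowledge this issue at the end, and that is the branch you must take.

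Second, your claim that diagonalizability of $A(1)$ ``follows under the standard integrality hypothesis on the framings of the $2$--handles'' is simply false: the intersection form of a $4$--manifold with boundary need not be diagonalizable over $\Z$. The present paper itself furnishes a counterexample---when $k$ is even, $A(1)=\left(\begin{smallmatrix}0&1\\1&-k\end{smallmatrix}\right)$ is not diagonalizable over $\Z$, which is precisely why Theorem~\ref{cor:unknotting} needs three crossing changes rather than two in that case. In \cite{BF1} the diagonalizability condition in Definition~\ref{def:repres} is obtained not from the raw intersection form but after stabilizing by $\oplus(\pm 1)$ summands; the statement of Theorem~\ref{thm:26} here is using ``represents'' in a slightly looser sense than Definition~\ref{def:repres}.
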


In the light of Theorem~\ref{thm:26}, the
proof of Theorem~\ref{thm:main} consists of constructing an appropriate manifold $W_K$ and applying Theorem~\ref{thm:26}.
The construction begins with noticing that the
twisting move can be realized by a surgery. Namely we have the following well-known fact.
\begin{proposition}\label{prop:surgerytwist}
A $k$--twisting move can be realized by a $-1/k$ surgery on a knot. That is, if $K_2$ arises from $K_1$ by a $k$--twisting move, then there is a simple closed
circle $C$ disjoint from $K_1$, such that $C$ bounds a smooth disk intersecting $K_2$ at two points with opposite signs and
such that the $-1/k$ surgery on $C$ transforms $K_1$ into $K_2$; see Figure~\ref{fig:twist_surg}
\end{proposition}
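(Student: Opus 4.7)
The plan is to build the surgery curve $C$ explicitly from the picture of the twisting move and then invoke the standard \emph{Rolfsen twist} formula to identify $-1/k$ surgery on $C$ with adding $k$ full right twists to any strands passing through a spanning disk of $C$.

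First I would take $C$ to be a small unknotted meridional circle drawn in the plane of Figure~\ref{fig:k_untwist}, encircling the two horizontal strands of $K_1$ on which the twist is performed. By construction $C$ bounds an obvious flat disk $D$ in $S^3$, and because the two strands of $K_1$ inside the twisting region point in opposite directions, the disk $D$ meets $K_1$ transversely in exactly two points of opposite sign; this is the first bulleted condition in the statement. A small isotopy shows that $C$ is also disjoint from $K_1$, and remains disjoint from the result of the twisting move away from the region bounded by $D$.

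Next I would compute the effect of $-1/k$ surgery on $C$ using the standard Rolfsen/Kirby twist picture. Take a tubular neighborhood $N(C)\cong S^1\times D^2$ with meridian $\mu$ and preferred longitude $\lambda$ (so that $\lambda$ is nullhomologous in the complement); the $-1/k$ surgery is performed by cutting out $N(C)$ and regluing so that the new meridian is $-\mu+k\lambda$. Equivalently, one may use the self-homeomorphism of $N(C)$ given by $k$ applications of a Dehn twist along the meridional disk $D$, which extends across the new solid torus. This homeomorphism, when viewed in the complement, is realized by cutting $S^3$ along $D$ and re-gluing after a rotation by $2\pi k$; the net effect on any arc passing transversely through $D$ is the addition of $k$ full right-handed twists. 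Applied to the two strands of $K_1$ that intersect $D$ with opposite signs, this produces exactly the local picture of the $k$-twisting move in Figure~\ref{fig:k_untwist}, hence transforms $K_1$ into $K_2$.

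The only real subtleties I anticipate are bookkeeping ones: checking that the handedness convention for the ``$-1/k$'' surgery coefficient matches the convention for a ``right $k$-twist'' in the definition of a $k$-twisting move, and making sure the opposite-sign intersections of $D$ with $K_1$ indeed give the $\ol{t}_{2k}$-picture rather than a coherent-orientation twist (which would instead correspond to a band move). Once the signs are pinned down, the argument is a direct application of the Rolfsen twist and requires no extra input.
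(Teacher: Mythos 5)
Your proposal is correct and takes essentially the same route as the paper, which states the proposition as a well-known fact and justifies it only in a remark by appeal to the Rolfsen twist, citing \cite[Figure~5.27]{Stipsicz} and \cite[Figure 3.12]{Sav} precisely for the handedness conventions you flag as the one subtlety. Your write-up merely spells out the cut-along-the-disk-and-reglue argument that those references encapsulate.
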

\begin{figure}
\begin{tikzpicture}[y=2pt,x=2pt,yscale=-1, inner sep=0pt, outer sep=0pt]
  \draw
    (26.8363,96.8616) .. controls (49.0963,93.9030) and (56.4631,107.2535) ..
    (75.4062,100.0744);
  \draw
    (25.7024,101.9643) .. controls (26.1564,101.9151) and (26.5980,101.8230) ..
    (27.0303,101.8421) .. controls (37.5839,102.3070) and (40.1972,103.7781) ..
    (47.8140,101.0193);
  \draw
    (52.1607,98.1845) .. controls (69.0885,89.8972) and (85.1080,106.8904) ..
    (100.7306,100.6414);
  \draw
    (81.0759,97.9955) .. controls (96.5149,89.4987) and (107.3195,106.9761) ..
    (120.9524,101.2083);
  \draw
    (104.5104,98.7515) .. controls (117.1519,91.0529) and (127.7560,102.8947) ..
    (138.7173,101.2083);
  \draw
    (128.7009,97.6176) .. controls (131.7275,96.1448) and (134.8298,95.4826) ..
    (138.1503,94.9717);
\begin{scope}[xshift=-180,yshift=-50]
  \draw
    (120,95) -- 
    (164.5,95);
  \draw
    (120,100) -- 
    (164.5,100);
  \draw
    (171,95) --
    (230,95);
  \draw
    (171,100) --
    (230,100);
  \draw
    (182.7515,92.5149) .. controls (164.0099,70.4633) and (161.9341,122.7147) ..
    node[near start,above,scale=0.7] {$1/k$} (182.1845,101.3973);
\end{scope}
\end{tikzpicture}
\caption{The $1/k$ surgery on the circle in the top picture induces $k$ full left twists of the two strands passing through the circle.}\label{fig:twist_surg}
\end{figure}
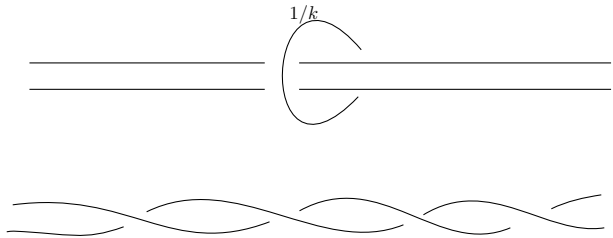
\begin{remark}
The move described in Figure~\ref{fig:twist_surg} is a special case of the Rolfsen twist, see \cite[Figure~5.27]{Stipsicz}.
It can be seen on \cite[Figure 3.12]{Sav} that the surgery with a positive coefficient (i.e. the $1/k$ surgery if $k>0$)
gives rise to a left $k$--twist and the surgery with a
negative coefficient (i.e. the $-1/k$ surgery with $k>0$) gives rise to a right $k$--twist.
\end{remark}
The surgery in Figure~\ref{fig:twist_surg} can be changed into a surgery with integer coefficients as in Figure~\ref{fig:othersurg}
by a `slam-dunk' operation, see \cite[Section 5.3]{Stipsicz}.
\begin{figure}
\begin{tikzpicture}
\begin{scope}[thick,decoration={
    markings,
    mark=at position 0.5 with {\arrow{>}}}
    ]
\begin{scope}[xshift=-2cm,yshift=-3cm] 
\draw[postaction={decorate}](-0.15,3) -- (-0.15,5);
\draw[postaction={decorate}](0.15,5) -- (0.15,3);
\draw(-0.25,3.6) arc [x radius=1,y radius=0.4, start angle=105, delta angle=330];
\draw(1.3,3.3) node [scale=0.8] {$1/k$};
\draw(-0.15,2.7) -- (-0.15,2);
\draw(0.15,2.7) -- (0.15,2);
\end{scope}
\begin{scope}[xshift=6cm]
\draw[postaction={decorate}](-3.15,0) -- (-3.15,2);
\draw[postaction={decorate}](-2.85,2) -- (-2.85,0);
\draw(-3.25,0.6) arc [x radius=1,y radius=0.4, start angle=105, delta angle=200] coordinate (A);
\path[draw=none] (A) arc [x radius=1, y radius=0.4, start angle=305, delta angle=50] coordinate (B);
\draw (B) arc [x radius=1, y radius=0.4, start angle=355, delta angle=80];
\draw (A) ++ (-0.1,0.3) arc [x radius=1, y radius=0.4, start angle=190, delta angle=300];
\draw (-4.2,0.3) node[scale=0.8] {$c_0$};
\draw (-0.3,0.3) node[scale=0.8] {$c_1$};
\draw(-3.15,-0.3) -- (-3.15,-1);
\draw(-2.85,-0.3) -- (-2.85,-1);
\draw(-3.4,0.8) node [scale=0.8] {$0$};
\draw (-1.7,0.8) node [scale=0.8] {$-k$};
\end{scope}
\end{scope}
\end{tikzpicture}
\caption{Changing a $1/k$ surgery on a circle to a surgery on a two-component link with framings $0$ and $-k$.}\label{fig:othersurg}
\end{figure}
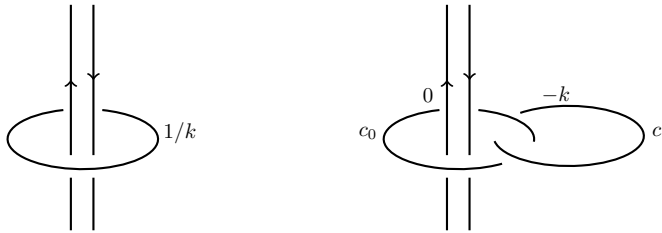

Suppose $J$ is a knot with Alexander polynomial $1$ and $K$ is a knot resulting from $J$ by applying a full left $k$--twist
(so $J$ is obtained from $K$ by a full right $k$--twist).
Let $M_J$ be the zero-surgery on $J$ and $M_K$ the zero--surgery on $K$. By \cite[Theorem 117B]{FreedmanQuinn}
$M_J$ is a boundary of a topological four--manifold that is a homotopy $D^3\times S^1$. Denote this four--manifold by $W_J$.

A full left $k$--twist on $J$ can be realized as a surgery on a two-component link with framings $0$ and $-k$ as in Figure~\ref{fig:othersurg}.
Let $c_0$ and $c_1$ denote the components of this link. The curve $c_0$ has framing $0$, $c_1$ has framing $k$. Both $c_0$ and $c_1$ are curves disjoint from $J$,
so we can and will assume that they are separated from a small neighborhood of $J$ in $S^3$. Performing a 0--surgery on $J$ does not affect these curves, therefore
$c_0$ and $c_1$ can also be viewed as curves on $M_J$. Now performing surgery on $c_0$ and $c_1$ produces $M_K$.

The trace of the surgery on $c_0$ and $c_1$ yields a cobordism between $M_J$ and $M_K$. Call this cobordism $W_{JK}$.
Define now
\[W_K=W_J\cup W_{JK}\]
so that $\partial W_K=M_K$.
We have the following fact.
\begin{lemma}\label{lem:cobounds}
We have $\pi_1(W_K)\cong\Z$, $H_1(W_K;\Z)\cong \Z$ and the inclusion of $M_K$ to $W_K$ induces an isomorphism on the first homology.
Moreover $H_2(W_K;\Z)\cong\Z^2$ and there exists spherical generators of $H_2(W_K;\Z)$.
\end{lemma}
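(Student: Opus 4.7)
The plan is to realize $W_K=W_J\cup W_{JK}$ as $W_J$ with two $2$--handles attached along $c_0$ (framing $0$) and $c_1$ (framing $-k$), both pushed into $M_J=\partial W_J$, and to read off every assertion of the lemma from this handle picture. The essential preliminary fact is that both $c_0$ and $c_1$ are nullhomologous in $M_J$. The curve $c_0$ bounds a disk in $S^3$ meeting $J$ in two points of opposite sign (the two strands pass through with opposite orientations), so $\mathrm{lk}(c_0,J)=0$; since $H_1(S^3\setminus J;\Z)\cong\Z$ is generated by a meridian and detected by linking number with $J$, this forces $[c_0]=0$ in $H_1(M_J;\Z)$. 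The curve $c_1$ is the slam-dunk meridian of $c_0$ and lies in a ball disjoint from $J$, so $[c_1]=0$ already in $S^3\setminus J$.

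Because $\pi_1(W_J)\cong\Z$ is abelian, each $c_i$ is nullhomotopic in $W_J$. Attaching $2$--handles along nullhomotopic curves in $W_J\simeq S^1$ produces a space homotopy equivalent to $S^1\vee S^2\vee S^2$, which instantly gives $\pi_1(W_K)\cong\Z$, $H_1(W_K;\Z)\cong\Z$, $H_2(W_K;\Z)\cong\Z^2$, and provides spherical generators: for each $i$, choose an immersed disk $\Delta_i\subset W_J$ with $\partial\Delta_i=c_i$ and cap it with the core $D_i\subset W_{JK}$ of the $i$--th $2$--handle to obtain an immersed $2$--sphere $S_i\subset W_K$. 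The spheres $S_0,S_1$ represent the two $S^2$ wedge factors, and equivalently each $S_i$ meets the cocore of the $i$--th handle transversally in a single point and is disjoint from the other cocore, so its intersection matrix with the cocores is the identity; hence $[S_0],[S_1]$ form a $\Z$--basis of $H_2(W_K;\Z)$.

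For the inclusion $M_K\hookrightarrow W_K$, pick any meridian $\mu$ of $K$ supported outside the twisting region: then $\mu$ is simultaneously a meridian of $J$, so it generates $H_1(M_K;\Z)$, $H_1(M_J;\Z)$, and (via the inclusion $M_J\hookrightarrow W_J$, which by hypothesis is an isomorphism on $H_1$) $H_1(W_J;\Z)=H_1(W_K;\Z)$. Thus the induced map $H_1(M_K;\Z)\to H_1(W_K;\Z)$ is an isomorphism.

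The only genuine obstacle in this argument is the initial linking-number computation establishing $[c_i]=0$ in $H_1(M_J)$; once this single input is in hand, the homotopy equivalence $W_K\simeq S^1\vee S^2\vee S^2$ together with Freedman's structure theorem for $W_J$ collapses the lemma to an immediate observation.
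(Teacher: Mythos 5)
Your proof is correct and follows essentially the same route as the paper: both arguments reduce everything to the observation that $c_0$ and $c_1$ are nullhomologous, hence (since $\pi_1(W_J)\cong H_1(W_J;\Z)\cong\Z$) nullhomotopic in $W_J$, and then obtain the spherical generators by capping nullhomotopy disks with the cores of the two $2$--handles. Your write-up actually supplies a detail the paper leaves implicit, namely the linking-number computation showing $[c_i]=0$ in $H_1(M_J;\Z)$. The one place where you genuinely diverge is the proof that $H_1(M_K;\Z)\to H_1(W_K;\Z)$ is an isomorphism: you chase a meridian of $K$ (taken away from the twisting region, so that it is simultaneously a meridian of $J$) through $M_K$, $M_J$, $W_J$ and $W_K$, whereas the paper turns the cobordism $W_{JK}$ upside down, viewing it as $M_K\times[0,1]$ with two $2$--handles attached along homologically trivial curves, so that the inclusion $M_K\hookrightarrow W_{JK}$ is immediately an $H_1$--isomorphism. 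Both are valid; the paper's inversion argument is slightly more robust (it needs no explicit generator and would survive generalizations where the meridian is less easy to locate), while yours is more concrete and makes visible why the generator of $H_1(M_K;\Z)$ maps to the generator of $H_1(W_K;\Z)$.
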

\begin{proof}
The homology groups of $W_K$ are calculated using the Mayer-Vietoris sequence. The manifold $W_K$ is obtained
from $W_J$ by adding two-handles along null-homologous curves $c_0$ and $c_1$.
This shows that $H_1(W_K;\Z)\cong\Z$ and $H_2(W_K;\Z)\cong\Z^2$.

To compute $\pi_1$ we observe that $\pi_1(W_J)\cong\Z$. Hence $c_0,c_1$ being null-homologous
are also null-homotopic. The van Kampen theorem implies  that $\pi_1(W_K)\cong\Z$. 

To show that the generators of $H_2(W_K;\Z)$ can be chosen to be spherical we again use the fact that $c_0$ and $c_1$ are null-homotopic in $W_J$.
This implies that $c_0$ and $c_1$ bound disks $D_0$ and $D_1$ in $W_J$. The disk $D_1$
can be chosen to be the obvious disk on $M_J$, but $D_0$ is in general only an immersed disk and it cannot lie on $M_J$ (because in general $c_0$ is
not null-homotopic on $M_J$). We can form spheres $\Sigma_0$ and $\Sigma_1$
by adding to $D_0$ and $D_1$ the cores of the two-handles that are attached. It is clear that the homology classes $[\Sigma_0]$ and $[\Sigma_1]$
generate $H_2(W_K;\Z)$. Moreover, by construction, $\Sigma_1$ is a smoothly embedded sphere and $\Sigma_0$ can be chosen to intersect $\Sigma_1$
precisely at one point.

Finally, in order to prove that the inclusion induced map  $H_1(M_K;\Z)\to H_1(W_K;\Z)$ is an isomorphism, invert the cobordism $W_{JK}$, that is,
present $W_{JK}$ as $M_K\times[0,1]$ with two two--handles attached. The attaching curves of these handles are homologically trivial (but not necessarily
homotopy trivial, $\pi_1(M_K)$ can be complicated),
hence the boundary inclusion induces an isomorphism $H_1(M_K;\Z)\cong H_1(W_{JK};\Z)$. Clearly $H_1(W_{JK};\Z)\cong H_1(W_K;\Z)$.

\end{proof}
Lemma~\ref{lem:cobounds} gives us two spheres $\Sigma_0,\Sigma_1\subset W_K$,
which are the generators of $H_2(W_K;\Z)$. Choose a basepoint $x_0=\Sigma_0\cap\Sigma_1$. This choice allows us to consider $\Sigma_0$
and $\Sigma_1$ as elements of $\pi_2(W_K,x_0)$.
\begin{lemma}\label{lem:generates}
The group $\pi_2(W_K,x_0)$ is freely generated as a $\Lambda=Z[\pi_1(W_K,x_0)]$--module by classes of $\Sigma_0$ and $\Sigma_1$. In particular
$\pi_2(W_K,x_0)\cong\Lambda^2$.
\end{lemma}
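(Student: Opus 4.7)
The plan is to reduce the computation of $\pi_2(W_K,x_0)$ to a straightforward calculation on a wedge of spheres, by exploiting the fact that both attaching circles $c_0$ and $c_1$ are null--homotopic in $W_J$.

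First I would argue that $W_K$ is homotopy equivalent to $S^1\vee S^2\vee S^2$. Since $W_J$ is a homotopy $D^3\times S^1$, we have $W_J\simeq S^1$; and by Lemma~\ref{lem:cobounds} (its proof, really) the curves $c_0,c_1$ are null--homotopic in $W_J$. Attaching a $2$--handle along a null--homotopic curve in a CW complex is homotopy equivalent to wedging on a $2$--sphere, so $W_K\simeq W_J\vee S^2\vee S^2\simeq S^1\vee S^2\vee S^2$, and the two wedge summands $S^2$ can be taken to correspond to the spheres $\Sigma_0,\Sigma_1$ constructed in Lemma~\ref{lem:cobounds}, since those were obtained by capping the cores of the $2$--handles with null--homotopies of $c_0,c_1$ in $W_J$.

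Next I would pass to the universal cover. The universal cover of $S^1\vee S^2\vee S^2$ is obtained from the universal cover $\R$ of $S^1$ by wedging on a copy of $S^2\vee S^2$ at each integer translate of the basepoint. In particular $\wt{W}_K$ is simply connected, so Hurewicz gives
\[
\pi_2(W_K,x_0)\;\cong\;\pi_2(\wt{W}_K,\wt{x}_0)\;\cong\;H_2(\wt{W}_K;\Z),
\]
and the latter is visibly a free $\Z[\pi_1(W_K)]=\Lambda$--module of rank $2$, with basis given by the $\Z$--translates of the two wedge--summand $2$--spheres. Under the identification from the first step these translates are the $\Lambda$--orbits of $[\Sigma_0]$ and $[\Sigma_1]$, which yields $\pi_2(W_K,x_0)\cong\Lambda^2$ with the desired generators.

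The only real content here is the homotopy equivalence $W_K\simeq S^1\vee S^2\vee S^2$; once it is in hand everything else is formal. The main subtlety I would be careful about is that the disk $D_0$ provided by Lemma~\ref{lem:cobounds} is only immersed, but this is harmless because all that is needed to collapse the attaching circles in $W_J$ up to homotopy is the existence of null--homotopies, not embedded null--homologies. A reader who prefers a chain--level argument could alternatively run Mayer--Vietoris for the decomposition $W_K=W_J\cup W_{JK}$ with $\Lambda$ coefficients, using $H_*(\wt{W}_J;\Z)=H_*(\mathrm{pt};\Z)$ to deduce that $H_2(W_K;\Lambda)\cong\Lambda^2$ on the handle cores; this gives the same conclusion.
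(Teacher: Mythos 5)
Your argument is correct. The only substantive input is the same as in the paper --- namely that $W_K$ is obtained from $W_J\simeq S^1$ (so $\pi_1(W_J)=\Z$ and $\pi_2(W_J)=0$) by attaching two $2$--handles along curves that are null--homotopic in $W_J$ --- but where the paper simply invokes \cite[Proposition 3.30]{Ran02} at this point, you make the argument self--contained: a $2$--handle deformation retracts onto its core $2$--cell, attaching a $2$--cell along a null--homotopic circle is, up to homotopy, wedging on an $S^2$, hence $W_K\simeq S^1\vee S^2\vee S^2$, and the $\Lambda$--module structure of $\pi_2$ is then read off from $H_2$ of the universal cover via Hurewicz. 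This is a legitimate and more elementary route; it also absorbs the content of Lemma~\ref{lem:lambdaiso}, which the paper keeps separate. Two small points worth making explicit if you write this up: the homotopy equivalence $W_K\simeq W_J\vee S^2\vee S^2$ depends on a choice of null--homotopies of $c_0,c_1$, and to identify the wedge summands with $[\Sigma_0],[\Sigma_1]$ you should use the same disks $D_0,D_1$ that cap the handle cores (you do note this); and the attaching circles live in $\partial W_J=M_J$, where they need not be null--homotopic --- what matters is only the homotopy class of the composite $S^1\to M_J\hookrightarrow W_J$, which is what the wedge argument uses.
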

\begin{proof}
The space $W_K$ is obtained from $W_J$ by attaching two two--handles along null-homotopic curves $c_0$ and $c_1$. We have that $\pi_1(W_J)=\Z$
and $\pi_2(W_J)=0$ by definition. The statement follows from \cite[Proposition 3.30]{Ran02}.
\end{proof}

We will use Lemma~\ref{lem:generates} in connection with the following well-known result.
\begin{lemma}\label{lem:lambdaiso}
We have an isomorphism of $\Lambda$--modules $\pi_2(W_K,x_0)\cong \pi_2(\wt{W}_K,\wt{x}_0)\cong H_2(\wt{W}_K;\Z)\cong H_2(W_K;\Lambda)$.
\end{lemma}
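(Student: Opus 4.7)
The plan is to establish the three claimed isomorphisms one at a time; each is standard, and the only substantive input is the computation $\pi_1(W_K)\cong\Z$ from Lemma~\ref{lem:cobounds}, which ensures that the universal abelian cover $\wt{W}_K$ coincides with the universal cover.

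For the first isomorphism $\pi_2(W_K,x_0)\cong\pi_2(\wt{W}_K,\wt{x}_0)$, I would invoke the fact that a covering projection $p\colon\wt{W}_K\to W_K$ is a fibration with discrete fibers. The long exact sequence of homotopy groups of a fibration then gives $\pi_n(\wt{W}_K,\wt{x}_0)\cong\pi_n(W_K,x_0)$ for all $n\ge 2$, because $\pi_n$ of the discrete fiber vanishes in these degrees.

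For the second isomorphism $\pi_2(\wt{W}_K,\wt{x}_0)\cong H_2(\wt{W}_K;\Z)$, I would apply the Hurewicz theorem. Since $\pi_1(W_K)\cong\Z$ is already abelian, the universal abelian cover $\wt{W}_K$ is in fact the universal cover, hence simply connected. The Hurewicz map in dimension $2$ is therefore an isomorphism.

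The third isomorphism $H_2(\wt{W}_K;\Z)\cong H_2(W_K;\Lambda)$ is essentially by definition: the singular chain complex $C_*(\wt{W}_K;\Z)$ carries the deck-transformation action of $\pi_1(W_K)=\Z$, giving it the structure of a chain complex of free $\Lambda$-modules, and $H_*(W_K;\Lambda)$ is defined as the homology of this complex. One only has to note that taking $\Z$-homology of $\wt{W}_K$ and taking $\Lambda$-homology of $C_*(\wt{W}_K;\Z)$ yield the same underlying abelian groups, now with the additional $\Lambda$-module structure coming from deck transformations.

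I do not anticipate a serious obstacle; the delicate point, if any, is simply verifying that $\wt{W}_K$ is genuinely simply connected so that Hurewicz applies without the abelianization correction, and this is immediate from Lemma~\ref{lem:cobounds} since $\pi_1(W_K)\cong\Z$ is abelian.
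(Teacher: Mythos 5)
Your proposal is correct and follows essentially the same three-step route as the paper: covering-space invariance of higher homotopy groups, the Hurewicz isomorphism for the simply connected cover $\wt{W}_K$ (which is the universal cover since $\pi_1(W_K)\cong\Z$ is abelian), and the definition of twisted homology. You simply spell out the standard justifications (fibration sequence, equivariance of the chain complex) in slightly more detail than the paper does.
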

\begin{proof}

The
first isomorphism in the lemma is the isomorphism of higher homotopy groups under the covering map. The second is the Hurewicz isomorphism because $\wt{W}_K$ is
simply connected. The third isomorphism is the definition of the twisted homology groups.
\end{proof}
In particular,
Lemma~\ref{lem:generates} together with Lemma~\ref{lem:lambdaiso} gives
a simple and independent argument that $H_2(W_K;\Lambda)$ is a free $\Lambda$--module, compare \cite[Lemma 2.7]{BF1}.

\begin{corollary}\label{cor:liftsgenerate}
The (classes of the) lifts of $\Sigma_0$ and $\Sigma_1$ to $\wt{W}_K$ generate $H_2(W_K;\Lambda)$ as a $\Lambda$--module.
\end{corollary}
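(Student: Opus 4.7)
The plan is to chase the generating set of Lemma~\ref{lem:generates} through the chain of isomorphisms in Lemma~\ref{lem:lambdaiso} and verify that at each step the generators correspond exactly to the lifts of $\Sigma_0$ and $\Sigma_1$.

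First, I would fix a basepoint $\wt{x}_0\in\wt{W}_K$ lying over $x_0$, so that the covering map $p:(\wt{W}_K,\wt{x}_0)\to(W_K,x_0)$ induces the standard isomorphism $p_*:\pi_2(\wt{W}_K,\wt{x}_0)\to \pi_2(W_K,x_0)$ of $\Lambda$--modules. By Lemma~\ref{lem:generates}, this $\pi_2(W_K,x_0)$ is freely generated as a $\Lambda$--module by the based maps representing $\Sigma_0$ and $\Sigma_1$. Lifting each based sphere uniquely through $\wt{x}_0$ produces based maps $\wt{\Sigma}_0$ and $\wt{\Sigma}_1$ in $\pi_2(\wt{W}_K,\wt{x}_0)$ that are mapped by $p_*$ to the classes of $\Sigma_0$ and $\Sigma_1$. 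Hence $\wt{\Sigma}_0$ and $\wt{\Sigma}_1$ generate $\pi_2(\wt{W}_K,\wt{x}_0)$ as a $\Z$--module (after one also applies the deck translates, which become scalar multiplication by elements of $\Lambda$ on $\pi_2(W_K,x_0)$).

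Next, since $\wt{W}_K$ is simply connected, the Hurewicz homomorphism $h:\pi_2(\wt{W}_K,\wt{x}_0)\to H_2(\wt{W}_K;\Z)$ is an isomorphism and sends a based sphere to its fundamental class; in particular $h(\wt{\Sigma}_i)$ is exactly the homology class of the lifted sphere. Finally, the standard identification $H_2(\wt{W}_K;\Z)\cong H_2(W_K;\Lambda)$ is an isomorphism of $\Lambda$--modules under which $\Z$--generators of $H_2(\wt{W}_K;\Z)$, together with their deck translates, become $\Lambda$--generators of $H_2(W_K;\Lambda)$. Combining these steps, the images of $\wt{\Sigma}_0$ and $\wt{\Sigma}_1$ in $H_2(W_K;\Lambda)$ generate it as a $\Lambda$--module.

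The only mildly delicate point, and the one I would write out carefully, is the naturality claim that the composite chain of isomorphisms in Lemma~\ref{lem:lambdaiso} sends a $\pi_2$--class represented by a based sphere to the homology class of its lift. This follows from basepoint functoriality of $p_*$ and Hurewicz, so no obstacle of substance arises; the corollary then reduces to transporting the explicit generating set provided by Lemma~\ref{lem:generates}.
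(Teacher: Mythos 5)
Your argument is correct and is exactly the route the paper intends: the corollary is stated without proof precisely because it is the combination of Lemma~\ref{lem:generates} (the $\Lambda$--generators of $\pi_2(W_K,x_0)$ are $\Sigma_0$ and $\Sigma_1$) with the chain of $\Lambda$--module isomorphisms of Lemma~\ref{lem:lambdaiso}, under which based spheres go to the homology classes of their lifts. Your explicit check of naturality of $p_*$ and the Hurewicz map just spells out what the paper leaves implicit.
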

Recall that $A(t)$ is a matrix over $\Lambda$ representing the intersection
form on $H_2(W_K;\Lambda)$.
The following result together with Theorem~\ref{thm:26} gives the proof of Theorem~\ref{thm:main} from the introduction.
\begin{theorem}\label{thm:form}
The matrix $A(t)$ has form
\[\begin{pmatrix} \alpha(t) & 1\\ 1 & -k\end{pmatrix},\]
where $\alpha(t)\in \Lambda$ is such that $\alpha(1)=0$ and $\alpha(t^{-1})=\alpha(t)$.
\end{theorem}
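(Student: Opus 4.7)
The plan is to compute the matrix of the twisted intersection form directly in the basis of $H_2(W_K;\Lambda)$ provided by the two generating spheres, using the geometry of how they were built in Lemma~\ref{lem:cobounds}. Concretely, fix a lift $\wt{x}_0\in\wt{W}_K$ of the basepoint $x_0=\Sigma_0\cap\Sigma_1$; since each $\Sigma_i$ is simply connected, it admits a unique lift $\wt{\Sigma}_i$ through $\wt{x}_0$. By Corollary~\ref{cor:liftsgenerate} together with the rank-$2$ freeness from Lemma~\ref{lem:generates}, the pair $\wt{\Sigma}_0,\wt{\Sigma}_1$ forms a $\Lambda$-basis of $H_2(W_K;\Lambda)$, so it suffices to compute the four intersection numbers in this basis.

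Next I would compute the off-diagonal and the $(2,2)$-entries. The spheres $\Sigma_0$ and $\Sigma_1$ meet transversally at the single point $x_0$, so $\wt{\Sigma}_0\cap\wt{\Sigma}_1=\{\wt{x}_0\}$ while the other translates $t^n\wt{\Sigma}_1$ for $n\neq 0$ are disjoint from $\wt{\Sigma}_0$; hence the $(1,2)$-entry of $A(t)$ equals $1$. The sphere $\Sigma_1$ is smoothly embedded with normal Euler number equal to the framing $-k$ of $c_1$, and a push-off of $\wt{\Sigma}_1$ stays inside a tubular neighborhood disjoint from every other lift $t^n\wt{\Sigma}_1$; thus the $(2,2)$-entry is $-k$ and all other translate contributions vanish.

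Finally, define $\alpha(t):=\wt{\Sigma}_0\cdot\wt{\Sigma}_0\in\Lambda$. Hermitian symmetry of the twisted intersection pairing, combined with the trivial involution on $\Z$, immediately yields $\alpha(t^{-1})=\alpha(t)$. To see $\alpha(1)=0$, observe that specializing $t\mapsto 1$ converts $A(t)$ into the ordinary intersection form of $W_K$ in the basis $[\Sigma_0],[\Sigma_1]$; by construction this is the linking matrix $\left(\begin{smallmatrix}0 & 1 \\ 1 & -k\end{smallmatrix}\right)$ of the two attaching curves $c_0,c_1$ (framings $0$ and $-k$, linking number $1$), so in particular the $(1,1)$-entry at $t=1$ vanishes.

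The main obstacle in carrying out this plan is the rigorous justification that every nontrivial translate $t^n$ disjoins the relevant lifts into distinct sheets of $\wt{W}_K$: this requires identifying the meridional generator of $\pi_1(W_K)\cong\Z$ and checking that the local pictures of both handle attachments (together with a tubular neighborhood of $\Sigma_1$ and a small neighborhood of $x_0$) can be arranged inside a single fundamental domain of the cover. Once that is in place, the twisted intersection numbers collapse to the classical framing and linking data, and the only genuinely $\Lambda$-valued quantity left is the self-intersection of the immersed sphere $\Sigma_0$, which is precisely the polynomial $\alpha(t)$.
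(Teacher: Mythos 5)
Your proposal is correct and follows essentially the same route as the paper: both compute the entries of $A(t)$ as twisted intersection numbers of the spheres $\Sigma_0,\Sigma_1$ and use their simple-connectivity (resp.\ embeddedness) to collapse the twisted counts to the classical linking and framing data $1$ and $-k$. The only cosmetic difference is that you work with lifts in the infinite cyclic cover, whereas the paper works downstairs with based paths $\rho_y,\rho_y'$ and the loops $\theta_y$; you also make explicit the justifications of $\alpha(1)=0$ and $\alpha(t^{-1})=\alpha(t)$, which the paper leaves implicit.
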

\begin{proof}
By Corollary~\ref{cor:liftsgenerate} 
the entries of $A(t)$ are twisted intersection indices of $\Sigma_0$ and $\Sigma_1$.
For example, the bottom-right entry of $A(t)$
is equal to the twisted intersection index of $\Sigma_1$ and $\Sigma_1'$, where $\Sigma'_1$ is a small perturbation of $\Sigma_1$
intersecting $\Sigma_1$ in finitely many points.

To compute the twisted intersection index of $\Sigma_1$ and $\Sigma_1'$, choose a basing for $\Sigma_1$, $\Sigma_1'$,
that is a path $\gamma$ from $x_0$ to $\Sigma_1$ and a path $\gamma'$ from $x_0$ to $\Sigma_1'$. Let $x,x'$
be the end points of $\gamma$ and $\gamma'$.

For any intersection point $y\in\Sigma_1$ and $\Sigma_1'$
we choose a smooth path $\rho_y$ from $x$ to $y$ on $\Sigma_1$ and a path $\rho_y'$ from $x'$ to $y$ on $\Sigma_1'$; see Figure~\ref{fig:paths}.

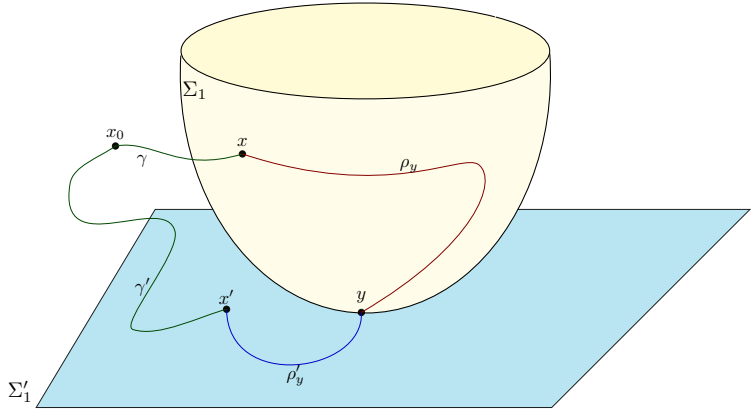
\begin{figure}
\definecolor{c0ea6d3}{RGB}{14,166,211}
\definecolor{c0c0500}{RGB}{12,5,0}
\definecolor{c0000c8}{RGB}{0,0,200}
\definecolor{c004800}{RGB}{0,72,0}
\definecolor{c840000}{RGB}{132,0,0}

\begin{tikzpicture}[y=3pt,x=3pt,yscale=-1, inner sep=0pt, outer sep=0pt]
  \path[draw=black,fill=c0ea6d3,opacity=0.886,miter limit=4.00,fill
    opacity=0.294,line width=0.020pt] (70,160) -- (85,135) --(160,135) -- (135,160) -- cycle;
  \path[draw=black,thin,fill=yellow!10]
    (134.7559,115.3322)arc(-4.951:58.193:23.340031 and
    30.144)arc(58.193:121.337:23.340031 and 30.144)arc(121.337:184.481:23.340031
    and 30.144);
  \path[draw=black,thin,fill=yellow!20]
    (127.9400,110.7281)arc(-45.000:44.901:23.245537 and
    6.048)arc(44.901:134.801:23.245537 and 6.048)arc(134.801:224.702:23.245537 and
    6.048)arc(224.702:314.602:23.245537 and 6.048);
  \path[draw=black,fill=c0c0500,opacity=0.886,miter limit=4.00,fill
    opacity=0.986,line width=0.056pt]
    (94,147.6) circle (0.4); \draw (94,146) node [scale=0.7] {$x'$};
  \path[draw=black,fill=c0c0500,opacity=0.886,miter limit=4.00,fill
    opacity=0.986,line width=0.056pt]
    (111,148) circle (0.4); \draw (111,146) node [scale=0.7] {$y$};
  \path[draw=black,fill=c0c0500,opacity=0.886,miter limit=4.00,fill
    opacity=0.986,line width=0.056pt] (96,128) circle (0.4);
\draw (96,126.5) node[scale=0.7] {$x$};
  \path[draw=black,fill=c0c0500,opacity=0.886,miter limit=4.00,fill
    opacity=0.986,line width=0.056pt]
    (80,127) circle (0.4);\draw (80,125.5) node [scale=0.7] {$x_0$};
  \path[draw=c0000c8,line join=miter,line
    width=0.056pt] (94,147.6) .. controls (94.1383,158.0702) and
    (111.4095,155.6654) .. node [midway,below,scale=0.7] {$\rho'_y$} (111,148);
  \path[draw=c004800,line join=miter,line cap=butt,even odd rule,line
    width=0.056pt] (96,128) .. controls (87.9014,130.4933) and
    (84.7752,126.1535) .. node [below=3pt,scale=0.7,near end] {$\gamma$} (80,127) .. controls (77.9760,128.4566) and
    (74.6446,129.4961) .. (74.2723,131.7299) .. controls (72.5559,142.0285) and
    (86.2278,132.6827) .. (87.5015,137.7775) .. controls (88.1933,140.5447) and
   (80.4222,149.5604) .. node[left,scale=0.7,midway] {$\gamma'$} (82.2098,150.1562) .. controls (85.4145,151.2245) and
   (90.7601,148.4302) .. (94,147.6);
  \path[draw=c840000,line join=miter,line cap=butt,even odd rule,line
    width=0.056pt] (96,128) .. controls (114.3802,134.0974) and
    (123.7400,127.7629) .. node [above,midway,scale=0.7] {$\rho_y$} (125.7876,129.3578) .. controls (128.2984,131.3134) and
    (125.4998,139.4373) .. (111,148);
  \draw (90,120) node [scale=0.8] {$\Sigma_1$};
\draw (68,158) node [scale=0.8] {$\Sigma_1'$};
\end{tikzpicture}
\caption{Notation in proof of Theorem~\ref{thm:form}. In the four-dimensional situation
the intersection of $\Sigma$ and $\Sigma'$ at $y$ is transverse.}\label{fig:paths}
\end{figure}
Let $\theta_y$ be the loop
$(\gamma')^{-1}(\rho_y')^{-1}\rho_y\gamma$. Define $n_y\in\Z$ to be the homology class of $\theta_y$ in $H_1(W_K;\Z)\cong\Z$. Finally, let
$\epsilon_y$ be the sign of the intersection point $y$ assigned in the usual way, that is, if $T_y\Sigma_1\oplus T_y\Sigma_1'=T_yW_{K}$ agrees with
 the orientation, we set $\epsilon_y=+1$, otherwise we set $\epsilon_y=-1$.

Given these definitions, the twisted intersection index of $\Sigma_1$ and $\Sigma_1'$ is equal to
\begin{equation}\label{eq:twistint}
\sum_{y\in\Sigma_1\cap\Sigma_1'} \epsilon_yt^{n_y}\in \Z[t,t^{-1}].
\end{equation}
In general this sum might depend on the choice of $\rho_y$ and $\rho_y'$. However if any smooth closed curve on $\Sigma_1$ and on $\Sigma_1'$
is homologically trivial in $W_K$ (in the language of \cite[Section 3.2]{BF3} this means that $\Sigma_1$ and $\Sigma_1'$ are homologically invisible in $W_K$),
the definition does not depend on paths $\rho_y$ and $\rho_y'$. In the present situation
$\Sigma_1$ and $\Sigma_1'$ are immersed (and even embedded) spheres, so they are homologically invisible, in particular \eqref{eq:twistint}
is a well-defined Laurent polynomial.

As $\Sigma_1$ and $\Sigma_1'$ are embedded spheres, we claim more, namely that $n_y$ does not depend on $y$. In fact, suppose $z$
is another intersection point of $\Sigma_1$ and $\Sigma_1'$.
If $n_z\neq n_y$, then
the curve $\delta=\rho_y\rho_z^{-1}\rho'_z(\rho_y')^{-1}$ is not homology trivial in $W_K$. As $\Sigma_1'$ is a perturbation of $\Sigma_1$,
the path $\rho'_z(\rho_y')^{-1}$ can be pushed by a homotopy (in $W_K$) to a path $\wt{\rho}$ on $\Sigma_1$ having the same endpoints. Then
$\rho_y\rho_z^{-1}\wt{\rho}$ is a loop hootomically equivalent to $\delta$, but this is a loop on a smoothly embedded sphere $\Sigma_1$. Hence it is
contractible in $W_K$.
This shows that $n_y=n_z$.

We conclude that the twisted intersection index of $\Sigma_1$ and $\Sigma_1'$ is equal to the standard intersection number of $\Sigma_1$ and $\Sigma_1'$
(which is equal to the self-intersection of $\Sigma_1$, that is $-k$) multiplied by $t^{n_y}$. We can choose a basing for
$\Sigma_1'$ in such a way that $n_y=0$.

An analogous, but simpler argument shows that $\Sigma_0\cdot\Sigma_1=\pm 1$. Indeed by construction $\Sigma_0\cap\Sigma_1$ consists of a single point.
It follows that the twisted intersection between $\Sigma_0$ and $\Sigma_1$ is $\pm t^m$ for some $m$. We choose a basing for $\Sigma_0$ in such a way that
$m=0$. We can also choose an orientation of $\Sigma_0$ in such a way that the sign is positive.
\end{proof}

\begin{remark}
There is an alternative calculation of the matrix $A$ using Rolfsen's argument \cite{Rol75}. However one still has to make some effort proving
that $A$ represents not only the Alexander module, but also the Blanchfield pairing.
\end{remark}

\section{Proof of Theorem~\ref{cor:unknotting}}\label{sec:applications}
We begin with proving Theorem~\ref{cor:unknotting}. The following corollary deals with the first part of this theorem.
\begin{corollary}\label{thm:unknotting}
Suppose $K$ is an algebraically $k$--simple and $k$ is odd. Then there are at most two crossing
changes that turn $K$ into a knot with Alexander polynomial $1$.
\end{corollary}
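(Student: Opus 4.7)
The plan is to deduce the corollary from Theorem~\ref{thm:main} together with the identification $n(K)=u_a(K)$ from \cite{BF1,BF3}. By Theorem~\ref{thm:main}, the Blanchfield pairing of $K$ is presented by the $2\times 2$ matrix
\[
A(t)=\begin{pmatrix} \alpha(t) & 1 \\ 1 & -k \end{pmatrix},
\]
and by Definition~\ref{def:repres} the only remaining thing to check before one can conclude $n(K)\le 2$ (and hence $u_a(K)\le 2$, which is the claim) is that the integer matrix $A(1)$ is diagonalizable over $\Z$. The other clauses of Definition~\ref{def:repres}, describing $H_1(M_K;\Lambda)$ and the form of the pairing, are exactly what Theorem~\ref{thm:main} provides.

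Using $\alpha(1)=0$, the matrix to analyse is
\[
A(1)=\begin{pmatrix} 0 & 1 \\ 1 & -k \end{pmatrix},
\]
which is symmetric and unimodular (since $\det A(1)=-1$), with signature $(1,1)$. Here is where the parity of $k$ enters: for $k$ odd the diagonal entry $-k$ is odd, so $A(1)$ is an odd unimodular symmetric integer form, and the classical classification of indefinite unimodular integer forms of rank~$2$ identifies it, up to congruence over $\Z$, with $\langle 1\rangle\oplus\langle -1\rangle$. For $k$ even the corresponding form is even, hence the hyperbolic form $H$, which is not congruent over $\Z$ to any diagonal matrix; this is precisely what will force the extra crossing change in the $k$ even part of Theorem~\ref{cor:unknotting}. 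To keep the argument self-contained I would exhibit the congruence directly: the matrix
\[
P=\begin{pmatrix} (k+1)/2 & (k-1)/2 \\ 1 & 1 \end{pmatrix}
\]
has integer entries because $k$ is odd, satisfies $\det P=1$, and a routine check gives $P^{T}A(1)P=\operatorname{diag}(1,-1)$.

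The main, essentially only, obstacle is to pinpoint where the hypothesis on $k$ is used, and it enters exactly through the odd/even dichotomy of the unimodular form $A(1)$; the remainder of the argument is automatic from Theorem~\ref{thm:main} and from the Borodzik--Friedl equality $n(K)=u_a(K)$, which converts the bound $n(K)\le 2$ into an honest sequence of at most two crossing changes transforming $K$ into a knot with Alexander polynomial $1$.
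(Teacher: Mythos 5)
Your proposal is correct and follows essentially the same route as the paper: apply Theorem~\ref{thm:main}, observe that for $k$ odd the matrix $A(1)=\left(\begin{smallmatrix}0&1\\1&-k\end{smallmatrix}\right)$ is diagonalizable over $\Z$, and invoke the Borodzik--Friedl identification of $n(K)$ with the algebraic unknotting number to conclude $u_a(K)\le 2$. The only difference is that you verify the diagonalizability explicitly (your matrix $P$ does satisfy $P^TA(1)P=\operatorname{diag}(1,-1)$), whereas the paper simply asserts it.
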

\begin{proof}
We have $A(1)=\left(\begin{smallmatrix} 0 & 1 \\ 1 &-k\end{smallmatrix}\right)$. As $k$ is odd, this matrix is diagonalizable over $\Z$.
By
\cite[Theorem 1.1]{BF3} we infer that the algebraic unknotting number of $K$ is at most $2$.
\end{proof}
If $k$ is even, then $A(1)$ is not diagonalizable over $\Z$, but $A(1)\oplus (1)$ is diagonalizable. The block matrix $A(t)\oplus (1)$ is a $3\times 3$
matrix over $\Lambda$ representing the Blanchfield pairing, so the algebraic unknotting number of $K$ is bounded from above by $3$. This shows the second
part of Theorem~\ref{cor:unknotting}.

We have the following consequence of Theorem~\ref{thm:main}.
\begin{theorem}
Suppose $K$ can be algebraically $k$--simple. Let $R_k=\Z[\frac1k]$. Then
$n_{R_k}=1$.
\end{theorem}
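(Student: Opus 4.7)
The plan is to take the $2\times 2$ representing matrix produced by Theorem~\ref{thm:main} and simplify it once $k$ is inverted in the coefficient ring. By Theorem~\ref{thm:main} the Blanchfield pairing of $K$ is represented over $\Lambda$ by
\[
A(t) = \begin{pmatrix} \alpha(t) & 1 \\ 1 & -k \end{pmatrix},
\]
with $\alpha(1)=0$ and $\alpha(t^{-1})=\alpha(t)$. Passing to the ring $R_k[t,t^{-1}]$ is harmless (it contains $\Lambda$), so this matrix continues to represent the Blanchfield pairing over $R_k[t,t^{-1}]$.

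Next I would exploit the fact that $k$ is a unit in $R_k$ to perform a unimodular congruence that kills the off-diagonal entries. Concretely, set
\[
P = \begin{pmatrix} 1 & 1/k \\ 0 & 1 \end{pmatrix} \in GL_2\!\left(R_k[t,t^{-1}]\right).
\]
Since the entries of $P$ are constants, $\ol P = P$, and a direct computation gives
\[
P\, A(t)\, P^T \;=\; \begin{pmatrix} \alpha(t) + 1/k & 0 \\ 0 & -k \end{pmatrix}.
\]
Sesquilinear congruence by a unit of $GL_2(R_k[t,t^{-1}])$ preserves the property of representing the Blanchfield pairing (it is exactly a change of generators of the presented module, which transforms the formula $a^T A^{-1}\ol b$ correspondingly), so the diagonal matrix on the right still represents the pairing.

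The entry $-k$ is a unit of $R_k[t,t^{-1}]$, so the cyclic summand it presents is the zero module and contributes nothing to the pairing; discarding this summand leaves the $1\times 1$ matrix $(\alpha(t)+1/k)$ representing the Blanchfield pairing of $K$ over $R_k[t,t^{-1}]$. At $t=1$ its entry is $1/k$, trivially ``diagonalizable'' in the sense of Definition~\ref{def:repres}. This gives $n_{R_k}(K)\le 1$.

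The main obstacle is purely bookkeeping: one should check carefully that Definition~\ref{def:repres} is stable under (i) base change from $\Lambda$ to $R_k[t,t^{-1}]$, (ii) unimodular sesquilinear congruence, and (iii) deletion of a diagonal summand whose entry is a unit. Each of these is standard, but together they warrant an explicit sentence. For the sharp equality $n_{R_k}(K)=1$, one observes that $\alpha(t)+1/k$ is a unit in $R_k[t,t^{-1}]$ only if $\alpha(t)=0$ (using $\alpha(t^{-1})=\alpha(t)$ and $\alpha(1)=0$), in which case $\det A(t)=-1$ forces the Alexander polynomial of $K$ to be trivial; thus $n_{R_k}(K)=1$ precisely when $K$ has nontrivial Alexander module, and is $0$ otherwise.
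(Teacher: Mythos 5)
Your proof is correct and follows essentially the same route as the paper: pass to $R_k[t,t^{-1}]$, diagonalize the matrix from Theorem~\ref{thm:main} by a congruence using that $k$ is now a unit, and delete the unit diagonal block to get a $1\times 1$ representative. The only difference is that the paper justifies the deletion of the unit summand by citing \cite[Proposition 1.7.1]{Ran81} rather than calling it standard, and your closing remark pinning down when equality $n_{R_k}(K)=1$ (as opposed to $\le 1$) holds is a small refinement the paper does not spell out.
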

\begin{proof}
By Theorem~\ref{thm:main} we know that the Blanchfield pairing
over $\Z$ can be represented by a matrix of form $\left(\begin{smallmatrix} \alpha(t) & 1 \\ 1 & -k\end{smallmatrix}\right)$. The same matrix represents
the Blanchfield pairing over $R_k$, but over $R_k$ this matrix is congruent to a matrix $\left(\begin{smallmatrix} \wt{\alpha}(t) & 0 \\ 0 & 1\end{smallmatrix}\right)$
for $\wt{\alpha}(t)\in R_k[t,t^{-1}]$.
By \cite[Proposition 1.7.1]{Ran81} (see also \cite[Proposition 3.1]{BF1}) the matrix $(\wt{\alpha}(t))$ also represents the Blanchfield pairing over $R_k[t,t^{-1}]$.
\end{proof}

The following corollary is well known, see \cite{Przy}.
\begin{corollary}
If $K$ is algebraically $k$--simple, then its Alexander polynomial is equal to $\Delta_K(t)=1+k\alpha(t)$, where $\alpha(t)\in\Z[t,t^{-1}]$.
\end{corollary}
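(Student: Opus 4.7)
The plan is to read off the Alexander polynomial directly from the $2\times 2$ presentation matrix supplied by Theorem~\ref{thm:main}. It is a classical fact (and immediate from the construction of the Blanchfield pairing as $a^T A^{-1} \ol{b}$) that if $A(t)$ is a square matrix over $\Lambda$ representing the Blanchfield pairing of a knot $K$, then $\det A(t)$ equals the Alexander polynomial $\Delta_K(t)$ up to multiplication by a unit $\pm t^n$ of $\Lambda$.

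By Theorem~\ref{thm:main}, the Blanchfield pairing of $K$ is represented by
\[A(t) = \begin{pmatrix} \alpha(t) & 1 \\ 1 & -k \end{pmatrix}\]
for some $\alpha(t) \in \Z[t,t^{-1}]$ with $\alpha(1)=0$. A direct calculation gives
\[\det A(t) = -k\,\alpha(t) - 1,\]
so $\Delta_K(t)$ is equal to $-k\alpha(t)-1$ up to a unit $\pm t^n$. I would then use the standard normalization of the Alexander polynomial to pin down this unit: evaluating at $t=1$ yields
\[-k\alpha(1) - 1 = -1,\]
which agrees with $\pm\Delta_K(1) = \pm 1$, so after absorbing the overall sign (i.e. replacing $\alpha$ by $-\alpha$ in the statement, which is allowed since $-\alpha$ is still a symmetric polynomial vanishing at $1$) we obtain $\Delta_K(t) = 1 + k\alpha(t)$.

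There is essentially no obstacle here: the whole content is contained in Theorem~\ref{thm:main}. The only item to check carefully is the bookkeeping of units of $\Lambda$, which is handled by the conditions $\alpha(1)=0$ and $\alpha(t^{-1})=\alpha(t)$ guaranteed by Theorem~\ref{thm:main}; the first ensures the value at $t=1$ matches the normalization, and the second ensures that $\det A(t)$ is symmetric under $t\mapsto t^{-1}$, consistent with the symmetry of $\Delta_K$.
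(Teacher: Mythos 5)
Your proof is correct and follows essentially the same route as the paper: the paper's own proof is exactly the one-line observation that $\det A(t)$ equals $\Delta_K(t)$ up to a unit of $\Lambda$, applied to the matrix from Theorem~\ref{thm:main}. Your extra care with the normalization (using $\alpha(1)=0$ and the symmetry of $\alpha$ to fix the unit) is a harmless elaboration of the same argument; note only that since $-\det A(t)=1+k\alpha(t)$ already, absorbing the sign does not even require replacing $\alpha$ by $-\alpha$.
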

\begin{proof}
This follows from Theorem~\ref{thm:unknotting} because if $A(t)$ represents the Blanchfield pairing of a knot $K$, then $\Delta_K(t)=\det A(t)$
up to multiplication by a unit in $\Lambda$.
\end{proof}

\section{Linking forms}\label{sec:linkingforms}
An abstract \emph{linking pairing} is a pair $(H,l)$,
where $H$ is a finite abelian group of an odd order and $l$ is a bilinear symmetric pairing $l\colon H\times H\to\Q/\Z$,
As a model example, if $Y$ is a closed three--manifold with $b_1(Y)=0$, there is defined a linking pairing $l(Y)$ on $H=H_1(Y;\Z)$. If $Y=\Sigma(K)$
is the double branched cover of a knot $K$, we denote this pairing by $l(K)$. It is known that the linking pairing $l(K)$ is represented by $V+V^T$,
where $V$ is the Seifert matrix for $K$. The meaning of `represented' is explained in the following definition.
\begin{definition}
Let $P$ be an $n\times n$ matrix with integer coefficients and such that $\det P$ is odd. The \emph{linking form represented by $P$} is the pair
$(H(P),l(P))$, where $H(P)=\Z^n/P\Z^n$ and $l(P)$ is the bilinear
form defined by
\begin{align*}
\Z^n/P\Z^n\times \Z^n/P\Z^n&\to \Q/\Z\\
(a,b)&\mapsto a^T P^{-1} b\bmod 1.
\end{align*}
\end{definition}
We have the following relation between the Blanchfield form for $K$ and the linking form $l(K)$.
\begin{proposition}[see \expandafter{\cite[Lemma 3.3]{BF1}}]
If $A$ is a matrix over $\Lambda$ representing the Blanchfield pairing, then $l(A(-1))=2l(K)$.
\end{proposition}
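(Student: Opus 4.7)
The plan is to specialize the Blanchfield pairing at $t=-1$ and to match the result with $2l(K)$ via the classical relationship between the Blanchfield form and the linking form on a double branched cover; the factor of $2$ emerges from the equality $(1-t)\vert_{t=-1}=2$. As a first step, since $A$ represents the Blanchfield pairing, $\det A(t) = \Delta_K(t)$ up to a unit in $\Lambda$, so $\det A(-1) = \pm \det K$ is odd and $l(A(-1))$ is defined on the odd-order group $H(A(-1))$. The classical identification
\[
H_1(\Sigma(K);\Z) \;\cong\; H_1(M_K;\Lambda)/(t+1)H_1(M_K;\Lambda),
\]
which is immediate from Seifert matrix presentations (both sides are $\Z^{2g}/(V+V^T)\Z^{2g}$), combined with $H_1(M_K;\Lambda)\cong\Lambda^n/A(t)\Lambda^n$, yields a canonical isomorphism $H_1(\Sigma(K);\Z) \cong H(A(-1))$.

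Next I would evaluate the Blanchfield pairing at $t=-1$. The involution $t \mapsto t^{-1}$ fixes $-1$, so the evaluation homomorphism $\mathrm{ev}_{-1}\colon \Lambda \to \Z$ intertwines conjugation with the identity and hence converts a sesquilinear form over $\Lambda$ into a symmetric bilinear form over $\Z$. Since $\det A(t)$ is nonzero at $-1$, the entries of $A(t)^{-1}$ viewed in $Q$ have denominators coprime to $(t+1)$, so they specialize to entries of $A(-1)^{-1}\in M_n(\Q)$. The Blanchfield formula $(a,b)\mapsto a^T A(t)^{-1}\overline b \bmod \Lambda$ therefore specializes to the symmetric $\Q/\Z$-valued pairing $(a,b)\mapsto a^T A(-1)^{-1}b \bmod \Z$ on $H(A(-1))$, which is $l(A(-1))$ by definition.

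To identify $l(A(-1))$ with $2l(K)$, I would invoke the classical specialization principle (see e.g.~\cite{Hi12}): the linking form on $\Sigma(K)$ is obtained from the Blanchfield form on $H_1(M_K;\Lambda)$ by dividing by $(1-t)$ before evaluating at $t=-1$, and this inverse factor at $t=-1$ equals $1/2$, whence $l(K) = \tfrac{1}{2} l(A(-1))$. The main obstacle is precisely this transfer step; I would verify it either by a cellular computation comparing the infinite cyclic cover $\wt{M}_K$ to the double branched cover, or by reducing to a Seifert-matrix model where the factor of $2$ appears concretely in the comparison between $A(-1)$ and $V+V^T$, and then extending to all Blanchfield-representing matrices via hermitian congruence and stabilization, both of which preserve the identity $l(A(-1))=2l(K)$.
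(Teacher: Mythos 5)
The paper offers no proof of this proposition: it is quoted from \cite{BF1} (Lemma 3.3), so there is no in-text argument to compare yours against line by line. Your outline does match the standard proof of the cited result: identify $H_1(\Sigma(K);\Z)$ with $H_1(M_K;\Lambda)/(t+1)H_1(M_K;\Lambda)\cong\Z^n/A(-1)\Z^n$; observe that evaluation at $t=-1$ carries the $Q/\Lambda$--valued pairing $a^TA(t)^{-1}\ol{b}$ to the symmetric $\Q/\Z$--valued pairing $a^TA(-1)^{-1}b$ (well defined because the denominators divide $\det A(t)\doteq\Delta_K(t)$ and $\Delta_K(-1)$ is odd, and because a change of lift by $(t+1)c+A(t)d$ dies under evaluation); and extract the factor $2$ from the value of $(t-1)$ at $t=-1$ in the Seifert--matrix formula for the Blanchfield pairing. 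You also correctly isolate where the real content sits, namely the identity $\tfrac12\,\mathrm{ev}_{-1}\circ\mathrm{Bl}=l(K)$.

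One of your two proposed ways of closing that last step is shakier than you suggest. The Seifert--matrix model $(1-t)V+(1-t^{-1})V^T=(1-t)(V-t^{-1}V^T)$ does \emph{not} represent the Blanchfield pairing in the sense of Definition~\ref{def:repres}: its cokernel is an extension of the Alexander module by $(\Lambda/(1-t))^{2g}$, it vanishes at $t=1$, and its determinant at $t=-1$ equals $2^{2g}\det(V+V^T)$, which is even, so $l$ of its value at $-1$ is not even defined under the paper's conventions. Hence ``reduce to a Seifert-matrix model and extend by hermitian congruence and stabilization'' has no legitimate base case among representing matrices, and the stable-congruence classification it leans on is itself a nontrivial theorem. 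The clean completion is the one implicit in your second paragraph: $\mathrm{ev}_{-1}\circ\mathrm{Bl}$ is an invariant of the abstract Blanchfield pairing, it equals $l(A(-1))$ for \emph{every} representing matrix $A$ by your evaluation argument (this already shows the left-hand side is independent of $A$), and it equals $2l(K)$ by a single comparison of the infinite cyclic cover with the double branched cover, using the formulas $\mathrm{Bl}(a,b)=a^T(t-1)(tV-V^T)^{-1}\ol{b}$ (as in \cite{FP}) and $l(K)(a,b)=a^T(V+V^T)^{-1}b$, which live on the module presented by $tV-V^T$ rather than on a representing matrix. With that substitution your argument is complete and agrees with the argument in \cite{BF1}.
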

Here $2l(K)$ means the linking pairing with the same underlying group as $l(K)$, but the linking form is multiplied by $2$; compare \cite[Section 3]{BF1}.

We can use this result to obtain the following corollary.
\begin{corollary}\label{cor:linking_form_obstruction}
Suppose $K$ is algebraically $k$--simple. Then the linking form $2l(K)$ is isometric to the linking form
represented by
\begin{equation}\label{eq:asB}
B=\begin{pmatrix} d & 1\\ 1 & -k\end{pmatrix},
\end{equation}
where $d=\alpha(-1)\in\Z$ is such that $-(dk+1)$ is the (signed) determinant of $K$.
\end{corollary}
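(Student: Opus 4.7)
The plan is to derive the corollary by directly substituting $t=-1$ into the Blanchfield matrix provided by Theorem~\ref{thm:main} and invoking the preceding proposition relating the Blanchfield matrix at $t=-1$ to the linking form. There is essentially no new technical input needed; the work is in assembling what has already been proved and checking that the definitions line up.

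First I would apply Theorem~\ref{thm:main} to produce the representing matrix
\[
A(t) = \begin{pmatrix} \alpha(t) & 1 \\ 1 & -k \end{pmatrix},
\]
with $\alpha(t)\in\Lambda$, $\alpha(1)=0$, and $\alpha(t^{-1})=\alpha(t)$. Setting $d:=\alpha(-1)\in\Z$, which is indeed an integer since $\alpha\in\Z[t,t^{-1}]$, gives
\[
A(-1) = \begin{pmatrix} d & 1 \\ 1 & -k \end{pmatrix} = B.
\]

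Next I would invoke the proposition (stated just above the corollary) which asserts that whenever $A$ represents the Blanchfield pairing, $l(A(-1))$ is isometric to $2l(K)$. Applied to our $A$, this yields that $l(B) \cong 2 l(K)$, which is the isometry claimed in the corollary. For the linking form $l(B)$ to be defined via the definition in the paper, one needs $\det B$ to be odd; this is automatic because $\det B$ will turn out to be (up to sign) the determinant of the knot $K$, which is always odd.

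Finally I would identify $-(dk+1)$ with the signed determinant of $K$. A direct computation gives $\det B = -dk-1 = -(dk+1)$. On the other hand, the corollary preceding this section shows that $\Delta_K(t)=\det A(t)$ up to multiplication by a unit of $\Lambda$, and the earlier formula $\Delta_K(t)=1+k\alpha(t)$ gives $\Delta_K(-1)=1+kd$. Since the determinant of $K$ is (up to sign) $\Delta_K(-1)$, we conclude that $-(dk+1)$ is the signed determinant of $K$, with the sign convention matching that used throughout.

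The only mild subtlety is bookkeeping of signs between the conventions for $\det K$, $\Delta_K(-1)$, and $\det A(-1)$, together with checking that the oddness hypothesis in the definition of $l(P)$ holds; both are immediate. No further geometric input beyond Theorem~\ref{thm:main} and the stated Blanchfield-to-linking proposition is required.
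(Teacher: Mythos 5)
Your proposal is correct and follows exactly the route the paper intends: the paper states the corollary as an immediate consequence of Theorem~\ref{thm:main} (via Theorem~\ref{thm:form}) combined with the proposition $l(A(-1))=2l(K)$, which is precisely your substitution of $t=-1$ into the representing matrix. The determinant identification and the oddness check you add are the same bookkeeping the paper leaves implicit.
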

As in \cite[Section 5.2]{BF1} we can use Corollary~\ref{cor:linking_form_obstruction} to obstruct untwisting number $2$.

From Corollary~\ref{cor:linking_form_obstruction} we immediately recover Theorem~\ref{thm:iscyclic} from the introduction.
\begin{proposition}\label{prop:is_cyclic}
If $K$ is algebraically $k$--simple and $\Sigma(K)$ is the double branched cover, then $H_1(\Sigma(K);\Z)$ is cyclic.
\end{proposition}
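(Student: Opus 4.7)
The plan is to read off the underlying abelian group directly from the matrix $B$ furnished by Corollary~\ref{cor:linking_form_obstruction}, and observe that the off-diagonal entry $1$ forces the presented group to be cyclic.

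First I would note that $H_1(\Sigma(K);\Z)$ has odd order (it equals $|\det K|$), so multiplication by $2$ is an automorphism of this group. Consequently $2l(K)$ is a non-degenerate linking pairing on the same underlying group as $l(K)$, namely $H_1(\Sigma(K);\Z)$ itself. By Corollary~\ref{cor:linking_form_obstruction}, this pairing is isometric to the one represented by
\[
B=\begin{pmatrix} d & 1\\ 1 & -k\end{pmatrix},
\]
so in particular the underlying abelian groups are isomorphic: $H_1(\Sigma(K);\Z)\cong \Z^2/B\Z^2$.

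Next I would perform a Smith-normal-form reduction on $B$. Since the $(1,2)$-entry equals $1$, unimodular row/column operations over $\Z$ clear the first row and first column, replacing $B$ by $\begin{pmatrix} 1 & 0 \\ 0 & -(dk+1)\end{pmatrix}$ (explicitly: in $\Z^2/B\Z^2$ the relation $e_1-ke_2=0$ lets one eliminate $e_1=ke_2$, after which the remaining relation $de_1+e_2=0$ becomes $(dk+1)e_2=0$). Therefore $\Z^2/B\Z^2 \cong \Z/(dk+1)\Z$, which is cyclic, and the isomorphism above then yields the claim.

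I do not anticipate any real obstacle here: the only subtle point is making sure that the passage from $l(K)$ to $2l(K)$ does not alter the underlying group, which follows immediately from the fact that the order of $H_1(\Sigma(K);\Z)$ is odd. Everything else is a one-line Smith-form computation on a $2\times 2$ integer matrix with a unit off the diagonal.
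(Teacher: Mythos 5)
Your proposal is correct and follows essentially the same route as the paper: both extract $H_1(\Sigma(K);\Z)\cong\Z^2/B\Z^2$ from Corollary~\ref{cor:linking_form_obstruction} and then use the unit off-diagonal entry of $B$ to reduce the presentation matrix by integral row/column operations, yielding the cyclic group $\Z/(dk+1)\Z$. Your extra remark that the odd order of $H_1(\Sigma(K);\Z)$ makes multiplication by $2$ an automorphism (so that $2l(K)$ and $l(K)$ share the same underlying group) is a small point the paper leaves implicit, but it does not change the argument.
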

\begin{remark}
It follows that Wendt's criterion for the unknotting number \cite{We37} coming from the double branched covers,
does not distinguish between knots that have unknotting number $1$ and knots that
are algebraically $k$--simple for some $k$.
\end{remark}
\begin{proof}[Proof of Proposition~\ref{prop:is_cyclic}]
By Corollary~\ref{cor:linking_form_obstruction} we infer that $H_1(\Sigma(K);\Z)\cong \Z^2/BZ^2$, where $B$ is as
in \eqref{eq:asB}. Subtract from the first column of $B$ the second column multiplied by $d$ to obtain the matrix
$\left(\begin{smallmatrix} 0 & 1 \\ 1+dk & -k\end{smallmatrix}\right)$. Then add to the second row the first one multiplied by $k$. We obtain the matrix
\[B'=\begin{pmatrix} 0 & 1 \\ 1+dk & 0\end{pmatrix}.\]
Row and column operations on matrices do not affect the cokernel, hence $\Z^2/B'\Z^2\cong\Z^2/B\Z^2$. Evidently we have $\Z^2/B'\Z\cong\Z/|dk+1|\Z$.
\end{proof}


\begin{thebibliography}{999}
\bibitem{Bl57}
R. C. Blanchfield, {\em  Intersection theory of manifolds with operators with applications to
knot theory}, Ann. of Math. 65 (1957), 340--356.
\bibitem{BF3} M.~Borodzik, S.~Friedl, \emph{On the algebraic unknotting number}
Trans. London Math. Soc. \textbf{1} (2014), no. 1, 57--84.

\bibitem{BF2} M.~Borodzik, S.~Friedl, \emph{Unknotting number and classical invariants. II.},
Glasg. Math. J. \textbf{56} (2014), no. 3, 657--680.

\bibitem{BF1} M.~Borodzik, S.~Friedl, \emph{Unknotting number and classical invariants. I.},
Alg. Geom. Top. \textbf{15} (2015), no. 1, 85--135.

\bibitem{Con} A.~Conway, \emph{An explicit computation of the Blanchfield pairing for arbitrary links},
preprint, arxiv:1706.00226.

\bibitem{CFT} A.~Conway, S.~Friedl, E.~Toffoli, \emph{The Blanchfield pairing of colored links},
preprint, arxiv:1609.08057.


\bibitem{Fo93}
M. Fogel, {\em The Algebraic Unknotting Number}, PhD thesis, University of California, Berkeley (1993).

\bibitem{COT03}
T. Cochran, K. Orr and P. Teichner, {\em
Knot concordance, Whitney towers
and $L^2$-signatures}, Annals of Mathematics, \textbf{157} (2003), 433--519.
\bibitem{Fo94}
M. Fogel, {\em Knots with Algebraic Unknotting Number One}, Pacific J. Math. \textbf{163} (1994), 277--295.
\bibitem{FP} S.~Friedl, M.~Powell, \emph{A calculation of Blanchfield pairings of 3-manifolds and knots},
preprint, arxiv 1512.04603.

\bibitem{FreedmanQuinn}
M. Freedman and F. Quinn, {\em Topology of 4--manifolds}, Princeton Mathematical
Series 39, Princeton University Press, Princeton, NJ (1990).

\bibitem{Stipsicz} Robert~Gompf and Andr\'as~Stipsicz,
\textit{4-manifolds and Kirby calculus},
Graduate Studies in Mathematics. \textbf{20}. Providence, RI. 1999.

\bibitem{Hi12}
J. Hillman, {\em  Algebraic invariants of links}, second edition, Series on Knots and Everything, 52. World Scientific Publishing Co. Inc., River Edge, NJ, (2012).
\bibitem{Kear}
C.  Kearton, {\em
Blanchfield duality and simple knots},
Trans. Am. Math. Soc. \textbf{202} (1975), 141--160.

\bibitem{Muk90}
H. Murakami, {\em Algebraic Unknotting Operation}, Q\&A. Gen. Topology \textbf{8} (1990), 283--292.
\bibitem{Przy} J.~Przytycki, \emph{$t_k$ moves on links}, Braids (Santa Cruz, CA, 1986), 615--656,
Contemp. Math., 78, Amer. Math. Soc., Providence, RI, 1988.

\bibitem{Ran81}
A. Ranicki, {\em Exact sequences in the algebraic theory of surgery}, Mathematical
Notes 26, Princeton (1981).
\bibitem{Ran02}
A. Ranicki, {\em Algebraic and geometric surgery},
Oxford Mathematical Monographs. Oxford Science Publications. The Clarendon Press, Oxford University Press, Oxford, 2002.

\bibitem{Rol75}
D.~Rolfsen,
\emph{A surgical view of Alexander's polynomial}, Geometric topology (Proc. Conf., Park City, Utah, 1974), pp. 415--423. 
Lecture Notes in Math., Vol. 438, Springer, Berlin, 1975. 

\bibitem{Sae99}
O. Saeki, {\em On Algebraic Unknotting Numbers of Knots}, Tokyo J. Math. \textbf{22} (1999), 425-443.
\bibitem{Sav}
N. Saveliev,  \emph{Lectures on the topology of 3-manifolds. An introduction to the Casson invariant},
De Gruyter Textbook. Walter de Gruyter \& Co., Berlin, 1999.
\bibitem{We37}
H. Wendt, {\em Die gordische Aufl\"osung von Knoten}, Math. Z.  \textbf{42} (1937), 680--696.
\end{thebibliography}
\end{document}